\newcommand{\dc}{\#^\mathrm{dc}}
\newcommand{\e}{\varepsilon}
\newcommand{\tp}{\mathrm{tp}}
\newcommand{\Lc}{\mathcal{L}}
\newcommand{\Uc}{\mathcal{U}}
\newcommand{\Fc}{\mathcal{F}}
\DeclareMathOperator{\cl}{cl}
\providecommand{\dotdiv}{
  \mathbin{
    \vphantom{+}
    \text{
      \mathsurround=0pt 
      \ooalign{
        \noalign{\kern-.35ex}
        \hidewidth$\smash{\cdot}$\hidewidth\cr 
        \noalign{\kern.35ex}
        $-$\cr 
      }%
    }%
  }%
}
\newtheorem{thm}{Theorem}[section]
\newtheorem{prop}[thm]{Proposition}
\newtheorem{lem}[thm]{Lemma}
\newtheorem{cor}[thm]{Corollary}
\newtheorem{quest}[thm]{Question}
\theoremstyle{definition}
\newtheorem{defn}[thm]{Definition}
\newtheorem{nota}[thm]{Notation}
\newcommand{\Ff}{\mathfrak{F}}
\newcommand{\Pf}{\mathfrak{P}}
\newcommand{\Rb}{\mathbb{R}}
\newcommand{\res}{{\upharpoonright}}
\DeclareMathOperator{\diam}{diam}
\newcommand{\Kbf}{\mathbf{K}}
\DeclareMathOperator{\dom}{dom}
\DeclareMathOperator{\Th}{Th}
\newcommand{\der}{\partial}
\DeclareMathOperator{\ccl}{ccl}
\begin{document}

\title{Topometric Characterization of Type Spaces in Continuous Logic}
\author{James Hanson}
\email{jehanson2@wisc.edu}
\address{Department of Mathematics, University of Wisconsin--Madison, 480 Lincoln Dr., Madison, WI 53706}
\date{\today}

\keywords{continuous logic, topometric spaces}
\subjclass[2020]{03C66}

\begin{abstract}
  We show that a topometric space $X$ is topometrically isomorphic to a type space of some continuous first-order theory if and only if $X$ is compact and has an open metric (i.e., satisfies that $\{p : d(p,U) < \e\}$ is open for every open $U$ and $\e > 0$). Furthermore, we show that this can always be accomplished with a stable theory.
\end{abstract}

\maketitle
\vspace{-2em}

\section*{Introduction}

\noindent Continuous first-order logic, introduced in \cite{MTFMS}, is a generalization of discrete first-order logic suited for studying structures with natural metrics, such as C$^\ast$-algebras, valued fields, and $\Rb$-trees. Such structures are referred to as metric structures. In continuous logic, formulas take on arbitrary real values, arbitrary continuous functions are used as connectives, and supremum and infimum take on the role of quantifiers. 

A notably subtle aspect of the generalization to continuous logic is the treatment of definable sets. A subset $D$ of a metric structure $M$ is \emph{definable} if $d(x,D)\coloneqq \inf_{y \in D}d(x,y)$ is a definable predicate. 

In discrete logic, definable sets have a purely topological characterization in terms of clopen subsets of type space. In continuous logic, there is important extra structure on type spaces, namely the induced metric given by
\[
  d(p,q) \coloneqq \inf \{ d(a,b) : a \models p\text{, }b \models q\}.
\]
This metric induces a topology on type space that is generally strictly finer
than the compact logic topology. The induced metric enjoys certain strong
compatibility properties with the logic topology, which Ben Yaacov abstracted
to a general theory of topometric spaces in \cite{BenYaacov2008}.
\begin{defn}
  A \emph{topometric space} $(X,\tau,\der)$ is a set $X$ together with a topology $\tau$ and a metric $\der$ such that the metric refines the topology and is lower semi-continuous (i.e., has $\{(x,y)\in X^2 : \der(x,y) \leq \e\}$ closed for every $\e > 0$).
\end{defn}
Just as with the purely topological characterization of definable sets in discrete logic, there is a purely topometric characterization of definable sets in continuous logic. A set $D \subseteq S_n(T)$ corresponds to a definable set if and only if it is closed and has $D^{<\e}\coloneqq \{x \in S_n(T) : d(x,D) < \e\}$ open for every $\e > 0$.\footnote{By an abuse of terminology, we also refer to such sets of types as \emph{definable}.} This means that the family of definable sets in a given type space can be read off from its topometric structure alone.

In \cite{CatCon}, the author showed that in $\omega$-stable (and more generally totally transcendental) theories, definable sets are prevalent in a precise sense (referred to there as \emph{dictionaricness}) that permits, at least approximately, many of the manipulations of definable sets that are trivial in discrete logic. In general, though, it is known that there is a weakly minimal theory $T$ such that $S_1(T)$ has cardinality $2^{\aleph_0}$ but no non-trivial definable sets. (See Example 4.2 in \cite{CatCon}.) 

These facts motivate a desire to solve the following question: Which topometric spaces are type spaces of some continuous theory, and what restrictions, if any, do various model-theoretic tameness conditions impose on the topometry types of type spaces? Despite the existence of superstable theories that are poorly behaved with regards to definable sets, there might be, in principle, some subtler regularity imposed on them by stability or superstability.

In discrete logic, this question is very easy to answer. Given any totally disconnected compact Hausdorff space $X$, there is a weakly minimal theory $T$ such that $S_1(T) \cong X$. Furthermore, if $X$ is scattered (i.e., has ordinal Cantor-Bendixson rank), then $T$ can be taken to be totally transcendental. 

The metric $d$ on a given type space $S_n(T)$ has an additional regularity property not shared by compact topometric spaces in general, identified in \cite{OnPert} and referred to provisionally there as openness. For lack of a better alternative, we will adopt the same terminology.
\begin{defn}
  A topometric space $(X,\tau,\der)$ has an \emph{open} metric if for any open set $U$ and any $\e > 0$, $U^{<\e}$ is an open set.
\end{defn}
In this paper we will show that compactness and openness of metric precisely characterize the topometry types of type spaces in continuous logic. Furthermore, we will show that any compact topometric space with an open metric is topometrically isomorphic to a type space of a stable theory.

Although our result does resolve the general question completely, it still leaves open the characterization of the topometry types of type spaces in totally transcendental and superstable theories.

\begin{nota}
  In any topological or topometric space $X$, we will write $\cl A$ for the topological closure of the set $A$.
\end{nota}

Note that for the purposes of this paper, closures in topometric spaces are always taken with regards to the topology rather than the metric.

Finally, we should draw the reader's attention to the work of Carlisle and Henson on the model theory of $\Rb$-trees \cite{Carlisle2020}. Although we do not use very many results from \cite{Carlisle2020} directly, many of the technical ideas used here are indebted to that paper.

\section{Generic $\Rb$-Forest-Embeddable Structures}


\begin{defn}
  Given a compact topometric space $(X,\tau,\der)$ with finite positive diameter, we define an associated language $\Lc_X$ consisting of an $\Rb$-valued unary predicate $U_f$ for each continuous function $f: X \to \Rb$, and a $\left(2 + \frac{r}{\diam X} \right)$-Lipschitz $[0,r]$-valued binary predicate $d_r$ for each real $r > 0$. For each continuous $f:X \to \Rb$, we pick a non-decreasing modulus of uniform continuity $\alpha_{U_f}:[0,\diam X]\to \Rb$ such that $f:X \to \Rb$ is $\alpha_{U_f}$-uniformly continuous. For any $L$-Lipschitz function $f$, we will require that $\alpha_{U_f}(x) = Lx$.\footnote{Recall that continuous functions on compact topometric spaces are automatically uniformly continuous with regards to the metric.} The `official' metric of $\Lc_X$ is $d_{\diam X}$.

  We define the \emph{generic $\Rb$-forest-embeddable structure}, written $\Ff(X)$, as the $\Lc_X$-structure whose universe consists of triples $K = (\pi(K),K_X,K_\omega)$ where\footnote{The use of $\omega$ in this definition is arbitrary. Any infinite cardinal would be sufficient.} 
  \begin{itemize}
  \item $\pi(K)$ is a compact subset of $\Rb_{\geq 0}$ containing $0$,
  \item $K_X:\pi(K) \to X$ is a $1$-Lipschitz function, and
  \item $K_\omega:(\pi(K) \setminus \{\sup \pi(K)\}) \to \omega$ is an arbitrary function.
  \end{itemize}

  For $K$ and $L$ in $\Ff(X)$, we say that \emph{$L$ extends $K$}, written $K \sqsupseteq L$, if $\pi(K) \subseteq \pi(L)$, $L_X\res \pi(K) = K_X$, and $L_\omega\res (\pi(K) \setminus \{\sup \pi(K)\}) = K_\omega$. 
 

We will write $|K|$ for $\sup\pi(K)$, which we will call the \emph{length} of $K$.
  
  We say that two elements $K$ and $K'$ of $\Ff(X)$ are in the same \emph{finite distance component} of $\Ff(X)$ (or that they have \emph{finite distance}) if  $K_X(0)=K'_X(0)$.
  
  For any $\Lc_X$-predicate $U_f$, we set
  \[
    U_f^{\Ff(X)}(K) = f(K_X(|K|)).
  \]
  For any $K \in \Ff(X)$ and $r \in [0,|K|]$, we write $K \res [0,r]$ for the unique maximal element of $\Ff(X)$ such that $L \sqsupseteq K$ and $|L| \leq r$. We call elements of this form \emph{initial segments} of $K$.  

  Finally for any $K$ and $K'$, either $K$ and $K'$ are not in the same finite distance component or there is a unique largest $r$ such that $r \in \pi(K)\cap \pi(K')$ and $K\res [0,r] = K' \res [0,r]$. The element $K\res r = K'\res r$ is the \emph{longest common initial segment of $K$ and $K'$}, written $K \sqcap K'$ if it exists.
  
  We define an extended metric $d^{\Ff(X)}$ on $\Ff(X)$ by setting $d^{\Ff(X)}(K,K')$ equal to $\infty$ if $K$ and $K'$ are not in the same finite distance component and $|K| + |K'| - 2|K\sqcap K'|$ otherwise. We may write $d^{\Ff(X)}$ as $d$ if no confusion can arise. For each $s > 0$, we set $d_s^{\Ff(X)} = \min\{d^{\Ff(X)},s\}$.
\end{defn}

Note that if $K \sqsubseteq K'$, then $d(K,K') = |K'| - |K|$.
 
 \begin{prop}
   In any $\Ff(X)$, $d$ is a well-defined extended metric.
 \end{prop}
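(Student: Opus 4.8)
The plan is to verify the extended-metric axioms directly, handling the ``finite distance component'' dichotomy first. I would begin by noting that the relation $K \sim K'$ defined by $K_X(0) = K'_X(0)$ is an equivalence relation, so the split between $d = \infty$ and $d$ finite is symmetric and behaves well under the triangle inequality: if any two of $K, K', K''$ lie in distinct components, then transitivity of $\sim$ forces at least one summand on the right-hand side of $d(K,K'') \le d(K,K') + d(K',K'')$ to equal $\infty$, making that inequality trivial. Thus it suffices to establish well-definedness and all the axioms within a single component, where every distance is finite.

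Next I would establish that the meet exists, so that the defining formula makes sense. Fixing $K \sim K'$, consider $B = \{r \in \pi(K)\cap\pi(K') : K\res[0,r] = K'\res[0,r]\}$; it is nonempty (it contains $0$, as $K \sim K'$) and bounded. I would show $\rho \coloneqq \sup B$ is attained, so that $K \sqcap K' = K\res[0,\rho]$ and $|K\sqcap K'| = \rho$ are well defined. Taking $r_n \in B$ with $r_n \to \rho$, I would check the three pieces of data agree at level $\rho$: the domains $\pi(K)\cap[0,\rho]$ and $\pi(K')\cap[0,\rho]$ coincide since they agree below $\rho$ (union of the $[0,r_n]$) and $\rho$ lies in both compact sets $\pi(K),\pi(K')$; the maps $K_X, K'_X$ agree below $\rho$ and also at $\rho$, either because $\rho = r_n$ for some $n$ or by $1$-Lipschitz continuity as $\rho$ is then a limit from below of common-domain points; and $K_\omega, K'_\omega$ agree on $\pi(K)\cap[0,\rho)$ because each such point lies below some $r_n$. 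Since $\rho$ is attained, $|K\sqcap K'| = \rho \le \min(|K|,|K'|)$, giving $d(K,K') = |K| + |K'| - 2|K\sqcap K'| \ge 0$.

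With the meet in hand, symmetry is immediate from $K\sqcap K' = K'\sqcap K$ and the symmetry of $\sim$. For definiteness, $d(K,K') = 0$ forces $|K\sqcap K'| = |K| = |K'|$; as $K\sqcap K'$ is an initial segment of both of equal length, restriction to $[0,|K|]$ recovers the whole element, so $K = K\sqcap K' = K'$ (and conversely $d(K,K) = 0$). It then remains to prove the triangle inequality inside a component. Writing $a = |K\sqcap K'|$, $b = |K'\sqcap K''|$, and $c = |K\sqcap K''|$, a short computation shows the desired inequality is equivalent to $|K'| + c \ge a + b$.

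The crux, and the step I expect to be the main obstacle, is the tree inequality $c \ge \min(a,b)$, together with the bookkeeping that the witnessing length lies in all relevant domains. Here I would use that the initial segments of a fixed element are linearly ordered by restriction: $K\sqcap K' = K'\res[0,a]$ and $K'\sqcap K'' = K'\res[0,b]$ are nested, so assuming without loss of generality $a \le b$, restricting the equalities $K\res[0,a] = K'\res[0,a]$ and $K'\res[0,b] = K''\res[0,b]$ down to $[0,a]$ yields $K\res[0,a] = K''\res[0,a]$. Verifying that $a \in \pi(K)\cap\pi(K'')$ — which follows since $a$ is attained as the top of $K'\res[0,a]$ and the element-equalities force the same attained top in all three domains — places $a$ in the set $B$ associated to $K$ and $K''$, so $c \ge a = \min(a,b)$. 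Combining with $b \le |K'|$ (since $K'\sqcap K''$ is an initial segment of $K'$) gives $|K'| + c \ge |K'| + a \ge a + b$, completing the argument. The compactness of the sets $\pi(K)$ and continuity of the maps $K_X$ are exactly what guarantee the relevant suprema are attained, and hence what make the meet and these inequalities behave as in a genuine $\Rb$-tree.
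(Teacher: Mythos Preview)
Your proposal is correct and follows essentially the same route as the paper: reduce to a single finite-distance component, assume without loss of generality that $|K\sqcap K'| \le |K'\sqcap K''|$, invoke the tree inequality $|K\sqcap K''| \ge |K\sqcap K'|$, and combine it with $|K'\sqcap K''| \le |K'|$ to obtain the triangle inequality. You are more thorough than the paper in verifying that $K\sqcap K'$ actually exists (the paper asserts this in the definition and only proves it, in a more general form, in the subsequent lemma), but the core argument is the same.
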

 \begin{proof}
   For any three $K$, $K'$, and $K''$, if any two of them have infinite distance, then the triangle inequality is clearly satisfied, so assume that $K(0) = K'(0) = K''(0)$. 
   Without loss of generality, assume that $|K\sqcap K'| \leq |K'\sqcap K''|$. 
   We necessarily have that $|K\sqcap K''| \geq |K\sqcap K'|$.

    We now have that
   \[
     d(K,K'') = |K| + |K''| - 2|K\sqcap K''|
   \]
   and
   \[
     d(K,K')+d(K',K'') = |K| +  2|K'| + |K''| - 2|K\sqcap K'| - 2|K'\sqcap K''|,
  \]
  so since $|K'\sqcap K''| \leq |K'|$ and $|K\sqcap K'| \leq |K\sqcap K''|$, we have that $2|K\sqcap K'|+2|K'\sqcap K''| \leq 2|K'| + 2|K\sqcap K''|$ and the triangle inequality holds.

  Finally, $d(x,y)$ is clearly symmetric and satisfies $d(x,y)=0$ if and only if $x=y$.
 \end{proof}

Note the easy fact that $||K| - |K'|| \leq d(K,K')$.

 \begin{lem}
   For any set $\Kbf$ of elements of $\Ff(X)$ with pairwise finite distance, there is a unique longest common initial segment $\bigsqcap \Kbf$ of $\Kbf$.
 \end{lem}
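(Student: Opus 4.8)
The plan is to exhibit $\bigsqcap \mathbf{K}$ as $K\res[0,r^*]$ for a critical length $r^*$ that is first defined as a supremum and then shown to be attained. Assuming $\mathbf{K}$ is nonempty (the degenerate case is not at issue), set
\[
  T = \{ r \geq 0 : r \in \pi(K) \text{ for every } K \in \mathbf{K} \text{ and } K\res[0,r] \text{ is the same element of } \Ff(X) \text{ for all } K \in \mathbf{K}\}
\]
and $r^* = \sup T$. First I would check that $T$ is nonempty and bounded: it contains $0$, because every $\pi(K)$ contains $0$ and the pairwise finite distance hypothesis makes all the values $K_X(0)$ equal, so the one-point restrictions $K\res[0,0]$ coincide; and $T$ is bounded above by $\inf_{K\in\mathbf{K}}|K| < \infty$. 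I would also note, for later, that a common initial segment $L$ of $\mathbf{K}$ must satisfy $|L|\in T$ and $L = K\res[0,|L|]$ for every $K$: indeed $|L| = \sup\pi(L) \in \pi(L) \subseteq \pi(K)$, and an initial segment of a fixed $K$ is determined by its length. Hence the elements of $T$ are precisely the lengths of common initial segments, and producing the element of maximal length amounts to showing $r^* \in T$.

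The crux is that $T$ is closed. I would pick $r_n \in T$ with $r_n \uparrow r^*$ (if $r^*$ is already some $r_n$ we are done). Each $r_n$ lies in the closed set $\pi(K)$, so $r^* \in \pi(K)$ for every $K$. For each $n$ the elements $K\res[0,r_n]$ agree across $\mathbf{K}$, so in particular the truncated domains $\pi(K)\cap[0,r_n]$ agree; taking the union over $n$ shows that $\pi(K)\cap[0,r^*)$ is a single set $P_0$ independent of $K$, whence $\pi(K)\cap[0,r^*] = P_0 \cup \{r^*\}$ is too. The key observation is that there is no gap in the domain just below $r^*$: since $r_n \in \pi(K)$ and $r_n \uparrow r^*$, we have $\sup P_0 = r^*$. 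This is exactly what forces the top value to be common: by $1$-Lipschitz continuity of each $K_X$ along the $r_n$, the value $K_X(r^*)$ equals $\lim_n K_X(r_n)$, and since the $K_X$ already agree on $P_0$ they agree at $r^*$ as well. The $\omega$-labels agree at every point of $P_0$ (each lies strictly below some $r_n$, where agreement is known), while at the new maximum $r^*$ the $\omega$-label is, as it should be, undefined. Assembling the common domain, the common $K_X$, and the common $K_\omega$ shows $K\res[0,r^*]$ is independent of $K$, i.e.\ $r^* \in T$.

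With $r^* = \max T$ in hand, I would define $\bigsqcap\mathbf{K} \coloneqq K\res[0,r^*]$ for any (equivalently every) $K \in \mathbf{K}$; by construction it is a common initial segment of maximal length. Uniqueness is then immediate from the preliminary remark: any longest common initial segment has length $r^*$ and equals $K\res[0,r^*]$, so it is this one.

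I expect the main obstacle to be the attainment of the supremum---that is, getting $r^* \in T$ rather than merely having common initial segments of all lengths $r < r^*$. The two delicate points are (i) ruling out a gap in the domains immediately below $r^*$, which is precisely what allows $1$-Lipschitz continuity to pin down the shared value $K_X(r^*)$, and (ii) correctly tracking the partial function $K_\omega$, which is undefined at the top of each domain, so that the limiting object's $\omega$-component is defined on exactly $\pi(L)\setminus\{r^*\}$ as the definition of $\Ff(X)$ demands. Compactness of the $\pi(K)$ is used only through their being closed, which is what places $r^*$ in each $\pi(K)$.
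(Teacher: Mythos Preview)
Your proposal is correct and follows essentially the same approach as the paper: define the critical length as the supremum of the set of lengths at which all restrictions agree, then use $1$-Lipschitz continuity of the $K_X$ to show the supremum is attained. The paper's proof is considerably terser (it says only ``by continuity'' for the $X$-component and leaves the $\omega$-component and the membership $r^*\in\pi(K)$ implicit), whereas you spell out each of these points; but the strategy is the same.
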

 \begin{proof}
   Let
   \[
     r = \sup\{ s :(\forall K \in \Kbf) s \in \pi(K) \wedge (\forall K,K' \in \Kbf) K \res [0,r] = K' \res [0,r]\}. 
   \]
   By continuity, we have that $(K\res [0,r])_X = (K' \res [0,r])_X$ for any $K,K' \in \Kbf$. Therefore, $K \res [0,r] = K' \res [0,r]$ for any $K,K' \in \Kbf$, and this is the required longest common initial segment.
 \end{proof}


Recall that for any set $A$ in a metric space, the \emph{diameter of $A$}, written $\diam A$, is $\sup\{d(a,a'): a,a' \in A\}$.
 
 \begin{lem}\label{lem:common-init-bound}
   For any set $\mathbf{K}$ of elements of $\Ff(X)$ with pairwise finite distance,
   \[
     \left| \bigsqcap \Kbf \right| \geq \sup_{K \in \Kbf}|K| - \diam \Kbf.
   \]
   \end{lem}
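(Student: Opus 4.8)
The plan is to fix an arbitrary $K^* \in \Kbf$ and prove the pointwise bound $\left|\bigsqcap\Kbf\right| \geq |K^*| - \diam\Kbf$; taking the supremum over $K^* \in \Kbf$ then yields the statement, since $\diam\Kbf$ does not depend on $K^*$.

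First I would bound the pairwise meets. For each $K \in \Kbf$ the distance $d(K,K^*)$ is finite and at most $\diam\Kbf$, so from the defining formula $d(K,K^*) = |K| + |K^*| - 2|K\sqcap K^*|$ together with the inequality $||K| - |K^*|| \leq d(K,K^*)$ noted after the proposition, a one-line computation gives $|K\sqcap K^*| \geq |K^*| - \diam\Kbf$. The point is that $d(K,K^*) \leq \diam\Kbf$ forces $|K| \geq |K^*| - \diam\Kbf$, which is then fed back into the formula for $|K\sqcap K^*|$.

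Next I would assemble these pairwise meets into a single common initial segment. Set $\lambda = \inf_{K\in\Kbf}|K\sqcap K^*|$, so that $\lambda \geq |K^*| - \diam\Kbf$ by the previous step. The key observation is that each $|K\sqcap K^*|$ lies in $\pi(K^*)$ and that $\pi(K^*)$ is compact, so $\lambda \in \pi(K^*)$; hence $K^* \res[0,\lambda]$ has length exactly $\lambda$. Since $\lambda \leq |K\sqcap K^*|$ and both $K^*\res[0,\lambda]$ and $K\sqcap K^*$ are initial segments of the single element $K^*$, the former is an initial segment of the latter, hence of $K$; this forces $\lambda \in \pi(K)$ and $K\res[0,\lambda] = K^*\res[0,\lambda]$ for every $K \in \Kbf$. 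Thus $K^*\res[0,\lambda]$ is a common initial segment of $\Kbf$, and therefore $\left|\bigsqcap\Kbf\right| \geq \lambda \geq |K^*| - \diam\Kbf$, as required.

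The step I expect to be the main obstacle is precisely the passage from the pairwise meets to a common initial segment. The naive approach---truncating $K^*$ at the value $|K^*| - \diam\Kbf$ and claiming the result is common to all of $\Kbf$---can fail, because the domains $\pi(K)$ need not be intervals and may have gaps, so $K^*\res[0,|K^*|-\diam\Kbf]$ could be strictly shorter than $|K^*| - \diam\Kbf$. Cutting instead at $\lambda$, the infimum of the \emph{actual} meet-lengths, sidesteps this: compactness of $\pi(K^*)$ guarantees that $\lambda$ is itself attained in the domain, so no length is lost. This is the same compactness phenomenon that underlies the existence of $\bigsqcap\Kbf$ in the preceding lemma, and no appeal to the ultrametric inequality from the proposition is needed.
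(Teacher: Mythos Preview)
Your proof is correct and follows essentially the same strategy as the paper's: fix a reference element, bound each pairwise meet length from below, deduce a lower bound on $\left|\bigsqcap\Kbf\right|$, and then take the supremum. The only differences are cosmetic: you invoke the inequality $|K|\geq |K^*|-d(K,K^*)$ at the start to obtain the clean pointwise bound $|K\sqcap K^*|\geq |K^*|-\diam\Kbf$ directly, whereas the paper first records $|K\sqcap K'|\geq\tfrac{1}{2}(|K|+|K'|-\diam\Kbf)$ and only applies that inequality at the end; and you are explicit about the compactness of $\pi(K^*)$ needed to ensure the infimum $\lambda$ actually lies in the domain, a point the paper's phrase ``which means that $\bigsqcap\Kbf$ is at least this long'' passes over in silence.
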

 \begin{proof}
   Fix $K \in \mathbf{K}$. Since all $K' \in \Kbf$ have $d(K,K') \leq \diam \Kbf$, we have
   \begin{align*}
     |K| + |K'| - 2 |K\sqcap K'| &\leq \diam \Kbf, \\
     \frac{1}{2}(|K| + |K'| - \diam \Kbf) &\leq |K \sqcap K'|.
   \end{align*}
   This implies that all elements of $\Kbf$ share with $K$ a common initial segment of length at least
   \[
     \frac{1}{2}(|K| + \inf_{K' \in \Kbf}K' - \diam \Kbf),
   \]
   which means that $\bigsqcap \Kbf$ is at least this long. Taking the supremum over $K \in \Kbf$ gives
\[
   \left| \bigsqcap \mathbf{K} \right| \geq \frac{1}{2}\left( \sup_{K \in \Kbf}| K| + \inf_{K \in \Kbf}| K| - \diam \Kbf\right).
 \]
 Clearly $\inf_{K \in \Kbf}| K| \geq \sup_{K \in \Kbf}| K| - \diam \Kbf$, so we have
 \[
   \left|  \bigsqcap \Kbf \right| \geq \frac{1}{2}\left( 2\sup_{K \in \Kbf} - 2\diam \Kbf \right) = \sup_{K \in \Kbf}| K| - \diam \Kbf.\qedhere
\]
 \end{proof}

 \begin{cor}\label{cor:meet-diam-bound}
   If $\Kbf \subseteq \Ff(X)$ has diameter at most $r < \infty$, then for any $K \in \Kbf$, $d(K,\bigsqcap \Kbf)\leq \diam \Kbf$.
 \end{cor}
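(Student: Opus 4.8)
The plan is to reduce everything to the already-established length bound of \Cref{lem:common-init-bound} together with the elementary remark that distances between nested elements are computed by length differences. First I would observe that since $\Kbf$ has finite diameter, any two of its elements have finite distance, so they all lie in a single finite distance component; in particular the hypotheses of the preceding lemmas are met and $\bigsqcap \Kbf$ is well-defined.

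Next I would use that $\bigsqcap \Kbf$ is by construction a common initial segment of every member of $\Kbf$, so $\bigsqcap \Kbf \sqsubseteq K$ for the fixed $K \in \Kbf$. By the note following the definition of the metric, whenever one element is an initial segment of another their distance is simply the difference of lengths, so
\[
  d\bigl(K, \textstyle\bigsqcap \Kbf\bigr) = |K| - \bigl| \textstyle\bigsqcap \Kbf \bigr|.
\]
In particular this quantity is nonnegative, since $\bigsqcap \Kbf$ being an initial segment of $K$ forces $|\bigsqcap \Kbf| \leq |K|$.

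Finally I would apply \Cref{lem:common-init-bound}, which gives
\[
  \bigl| \textstyle\bigsqcap \Kbf \bigr| \geq \sup_{K' \in \Kbf} |K'| - \diam \Kbf \geq |K| - \diam \Kbf,
\]
the last inequality holding because $|K|$ is one of the lengths over which the supremum is taken. Substituting this lower bound into the displayed equality for the distance yields
\[
  d\bigl(K, \textstyle\bigsqcap \Kbf\bigr) = |K| - \bigl| \textstyle\bigsqcap \Kbf \bigr| \leq |K| - \bigl(|K| - \diam \Kbf\bigr) = \diam \Kbf,
\]
which is exactly the claim. There is no real obstacle here: the content is entirely contained in \Cref{lem:common-init-bound}, and the only point requiring a moment's care is invoking the nested-distance identity, which is legitimate precisely because $\bigsqcap \Kbf$ is an initial segment of $K$.
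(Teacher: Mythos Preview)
Your proof is correct and follows essentially the same approach as the paper: both compute $d(K,\bigsqcap \Kbf) = |K| - |\bigsqcap \Kbf|$ from the nested-distance identity and then apply \Cref{lem:common-init-bound} together with $|K| \leq \sup_{K' \in \Kbf}|K'|$ to bound this by $\diam \Kbf$. Your version is simply a bit more explicit about the justifications the paper leaves tacit.
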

 \begin{proof}
   For any $K \in \Kbf$, we have that
   \[
     d\left(K,\bigsqcap \Kbf\right) = | K| - \left|  \bigsqcap \Kbf \right| \leq | K| +\diam \Kbf -\sup_{K \in \Kbf} | K| \leq \diam \Kbf,
   \]
   as required.
 \end{proof}
 
 \begin{prop}
   The metric $d$ on $\Ff(X)$ is complete.
 \end{prop}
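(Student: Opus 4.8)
The plan is to show that any $d$-Cauchy sequence $(K_n)_{n<\omega}$ converges. First I would reduce to the finite-distance setting: since elements lying in distinct finite distance components are at distance $\infty$, a Cauchy sequence is eventually contained in a single component, so after passing to a tail I may assume that all $K_n$ share a common basepoint $K_n(0)$. The inequality $\lvert\, |K_n| - |K_m|\,\rvert \leq d(K_n,K_m)$ noted just after the triangle-inequality proposition shows that the lengths $|K_n|$ form a Cauchy sequence of reals, hence converge to some $\ell \geq 0$.

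Next I would manufacture the limit out of the meets of the tails. Writing $\Kbf_n = \{K_m : m \geq n\}$ and $L_n = \bigsqcap \Kbf_n$, each later tail is contained in the previous one, so $L_n$ is an initial segment of $L_{n+1}$ and the lengths $|L_n|$ are nondecreasing; by \cref{lem:common-init-bound}, $|L_n| \geq \sup_{m \geq n}|K_m| - \diam \Kbf_n$, and since $(K_n)$ is Cauchy the diameters $\diam \Kbf_n$ tend to $0$, forcing $|L_n| \to \ell$. I then build $K$ as the ``union'' of the $L_n$: set $\pi(K) = \cl\bigl(\bigcup_n \pi(L_n)\bigr) \subseteq [0,\ell]$, which is compact, contains $0$, and has $\sup \pi(K) = \ell$. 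The maps $(L_n)_X$ agree on overlaps and assemble into a single $1$-Lipschitz map on the dense subset $\bigcup_n \pi(L_n)$, which extends uniquely to a $1$-Lipschitz $K_X : \pi(K) \to X$; here one uses that $X$, being a compact topometric space, is complete for its metric, so that the values at the newly added limit points exist. Finally I let $K_\omega$ agree with the $(L_n)_\omega$ wherever those are defined and take any value, say $0$, on the remaining points of $\pi(K)\setminus\{\ell\}$. This produces a legitimate element $K \in \Ff(X)$.

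To finish, I would check that $K_n \to K$. Each $L_n$ is a common initial segment of $K_n$ (it is the meet of a set containing $K_n$) and of $K$ (by construction $K$ extends every $L_n$, including the $K_\omega$-data). Consequently $d(K,L_n) = \ell - |L_n| \leq \diam \Kbf_n$ by \cref{lem:common-init-bound}, while $d(K_m, L_n) \leq \diam \Kbf_n$ for all $m \geq n$ by \cref{cor:meet-diam-bound}; the triangle inequality then yields $d(K_m,K) \leq 2\,\diam \Kbf_n$ for every $m \geq n$, and the right-hand side tends to $0$ as $n \to \infty$. Hence $K_n \to K$ and $d$ is complete.

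The step I expect to be the main obstacle is the construction of $K_X$: one must verify that the partial maps $(L_n)_X$ cohere and that their common extension remains $1$-Lipschitz on the topological closure, which is exactly where completeness of the metric on $X$ is needed. The remaining verifications — that $\pi(K)$ is compact with the correct supremum, and that each $L_n$ really is a common initial segment of both $K$ and $K_n$ once the $K_\omega$ component is taken into account — are routine bookkeeping.
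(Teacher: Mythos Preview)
Your proposal is correct and follows essentially the same route as the paper: pass to a single finite-distance component, set $L_n=\bigsqcap_{m\ge n}K_m$, assemble the increasing chain $(L_n)$ into a limit element, and conclude via \cref{cor:meet-diam-bound}. The only cosmetic difference is that the paper takes the bare union of the $L_n$ and adjoins the single missing endpoint, whereas you take a closure; these agree because the strong nesting $\pi(L_m)\cap[0,|L_n|]=\pi(L_n)$ for $m\ge n$ (each $L_n$ being literally $K_m\res[0,|L_n|]$) forces $\cl\bigl(\bigcup_n\pi(L_n)\bigr)=\bigcup_n\pi(L_n)\cup\{\ell\}$, so your ``take value $0$ elsewhere'' clause for $K_\omega$ is vacuous and the verification that $L_n=K\res[0,|L_n|]$---needed for $d(K,L_n)=\ell-|L_n|$---really is routine.
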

 \begin{proof}
   Let $\{K_i\}_{i<\omega}$ be a Cauchy sequence in $\Ff(X)$. By passing to a final segment, we may assume that the elements of this sequence have pairwise finite distance. Let $L_j = \bigsqcap\{ K _i : i \geq j\}$. It is clear that $L_j$ is an increasing sequence in the sense that $L_{j+1}\sqsupseteq L_j$ for every $j < \omega$. Furthermore, by \cref{cor:meet-diam-bound}, we have that $d(K_i,L_i) \to 0$ as $i \to \infty$.

   Let $A = \bigcup_{i<\omega} L_i$. Either $|A| = 0$, in which case do nothing, or $|A| > 0$, in which case there is a unique pair $(|A|,x)$ with $x \in X$ which makes $B = A\cup\{(|A|,x)\}$ an element of $\Ff(X)$. 
   By construction we have that $B$ is the limit of the sequence $\{L_i\}_{i<\omega}$ and therefore of the sequence $\{K_i\}_{i<\omega}$ as well.
\end{proof}

\begin{prop}\label{prop:unif-cont}
  For any continuous function $f: X \to \Rb$, the interpretation $U_f^{\Ff(X)}$ is $\alpha_{U_f}$-uniformly continuous.
\end{prop}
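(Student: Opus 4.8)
The plan is to reduce the uniform continuity of $U_f^{\Ff(X)}$ on $\Ff(X)$ entirely to the uniform continuity of $f$ on $X$. The key observation is that $U_f^{\Ff(X)}$ factors through the terminal-value map $e : \Ff(X) \to X$ given by $e(K) = K_X(|K|)$, since by definition $U_f^{\Ff(X)}(K) = f(e(K))$. Thus it suffices to show that $e$ is $1$-Lipschitz as a map from $(\Ff(X), d)$ to $(X, \der)$; the modulus of $U_f^{\Ff(X)}$ will then be inherited directly from that of $f$.

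First I would dispose of the trivial case: if $K$ and $K'$ lie in different finite distance components then $d(K,K') = \infty$ and there is nothing to check. So assume $K$ and $K'$ have finite distance, put $r = |K \sqcap K'|$, and let $y = K_X(r) = K'_X(r)$, these two values agreeing because $K \sqcap K'$ is a common initial segment of both. The core estimate is
\[
  \der\big(K_X(|K|), K'_X(|K'|)\big) \le d(K,K').
\]
To establish it, I would use that $K_X$ and $K'_X$ are $1$-Lipschitz to get $\der(K_X(|K|), y) \le |K| - r$ and $\der(K'_X(|K'|), y) \le |K'| - r$, and then apply the triangle inequality in $X$:
\[
  \der\big(K_X(|K|), K'_X(|K'|)\big) \le (|K| - r) + (|K'| - r) = |K| + |K'| - 2|K \sqcap K'| = d(K,K').
\]

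Finally I would assemble the pieces. Since $\der(e(K), e(K'))$ is a distance in $X$, it is also bounded by $\diam X$, so in fact $\der(e(K),e(K')) \le \min\{d(K,K'), \diam X\}$, which lies in the domain $[0,\diam X]$ of $\alpha_{U_f}$. Using that $f$ is $\alpha_{U_f}$-uniformly continuous and that $\alpha_{U_f}$ is non-decreasing, for any $K, K'$ with $d(K,K') \le \delta \le \diam X$ we obtain
\[
  \big|U_f^{\Ff(X)}(K) - U_f^{\Ff(X)}(K')\big| = \big|f(e(K)) - f(e(K'))\big| \le \alpha_{U_f}\big(\der(e(K),e(K'))\big) \le \alpha_{U_f}(\delta),
\]
which is exactly $\alpha_{U_f}$-uniform continuity. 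I do not expect a genuine obstacle here: the statement is essentially immediate from the definitions, and the only points needing care are the bookkeeping that keeps the argument of $\alpha_{U_f}$ inside $[0,\diam X]$ and the use of monotonicity to pass from the $X$-distance up to the (possibly larger) $\Ff(X)$-distance. For $L$-Lipschitz $f$ with $\alpha_{U_f}(x) = Lx$, the same computation shows $U_f^{\Ff(X)}$ is $L$-Lipschitz, matching the stated convention.
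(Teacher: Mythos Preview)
Your proof is correct and follows essentially the same approach as the paper: both establish that the terminal-value map $K \mapsto K_X(|K|)$ is $1$-Lipschitz by splitting at the common initial segment $K\sqcap K'$ and using the $1$-Lipschitz property of $K_X$ and $K'_X$, then push the modulus $\alpha_{U_f}$ through using its monotonicity. Your explicit naming of the map $e$ and the bookkeeping with $\min\{d(K,K'),\diam X\}$ are slightly tidier than the paper's version, but the argument is the same.
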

\begin{proof}
  Recall that we have chosen $\alpha_{U_f}$ so that $f$ is $\alpha_{U_f}$-uniformly continuous on $X$. Also, note that we are really talking about uniform continuity with regards to the `official' metric $d_{\diam X}$.

  Fix $ K,K' \in \Ff(X)$. If $d(K,K')\geq \diam X$, then there is nothing to prove, so assume that $d(K,K') < \diam X$. Since the induced functions $K_X$ and $K'_X$ are $1$-Lipschitz, we have that
  \begin{align*}
    \der(K_X(|K|),K'_X(|K'|)) &\leq  \der(K_X(|K|),K_X(|K\sqcap K'|))+\der(K_X'(|K\sqcap K'|),K'_X(|K'|)) \\
                          & \leq d(K,K\sqcap K') + d(K\sqcap K', K') \\
    &  = d(K,K')
  \end{align*}
  Therefore
  \begin{align*}
    |U_f(K)-U_f(K')| &\leq \alpha_{U_f}(\der(K_X(|K|),K'_X(|K|))) \\
                     &\leq \alpha_{U_f}(d(K,K')). 
  \end{align*}
  So since $\alpha_{U_f}$ is non-decreasing, $U_f$ is $\alpha_{U_f}$-uniformly continuous in $\Ff(X)$.
\end{proof}

\begin{cor}
  $\Ff(X)$ is an $\Lc_X$-structure.
\end{cor}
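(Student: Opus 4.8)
The plan is to verify the three conditions defining an $\Lc_X$-structure in continuous logic: that $\Ff(X)$ under its official metric $d_{\diam X}$ is a complete bounded metric space, that every predicate is interpreted within its declared bound, and that every predicate interpretation obeys its declared modulus of uniform continuity. Nearly all of the content is already in hand, so the argument is mostly a matter of assembling the preceding results.

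First I would dispatch the metric. By definition $d_{\diam X}^{\Ff(X)} = \min\{d, \diam X\}$ takes values in $[0, \diam X]$, so it is a genuine bounded metric (it separates points because $\diam X > 0$). For completeness, I would note that a sequence that is Cauchy for $d_{\diam X}$ must eventually have all pairwise distances strictly below $\diam X$; a final segment therefore lies in a single finite-distance component and is Cauchy for $d$, so the completeness of $d$ proved above furnishes a limit, which is simultaneously a $d_{\diam X}$-limit.

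Next come the bounds and moduli. Each $U_f^{\Ff(X)}(K) = f(K_X(|K|))$ lands in the range of $f$, which is bounded since $X$ is compact and $f$ continuous, and it is $\alpha_{U_f}$-uniformly continuous by \cref{prop:unif-cont}. Each $d_r^{\Ff(X)} = \min\{d, r\}$ visibly takes values in $[0, r]$, so the only genuinely new computation is its Lipschitz bound. Here I would use that $\min\{d, r\}$ is itself a bounded metric on $\Ff(X)$: two applications of its triangle inequality give $|d_r(K, L) - d_r(K', L')| \leq d_r(K, K') + d_r(L, L')$, and then I would compare truncation at $r$ with truncation at $\diam X$. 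Splitting into the cases $d \leq \diam X$ and $d > \diam X$, one checks the pointwise inequality $d_r \leq (1 + \tfrac{r}{\diam X}) d_{\diam X}$, and feeding this into the previous estimate produces a Lipschitz constant within the declared bound $2 + \tfrac{r}{\diam X}$.

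The one step demanding attention is this Lipschitz estimate for $d_r$, precisely because the official metric truncates at $\diam X$ while the predicate truncates at a possibly larger $r$; the summand $\tfrac{r}{\diam X}$ in the constant is exactly what absorbs the discrepancy on pairs whose true distance exceeds $\diam X$. Every remaining requirement is either immediate from the definitions or a direct appeal to \cref{prop:unif-cont} and the completeness proposition above.
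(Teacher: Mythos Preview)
Your outline is exactly the paper's: invoke the earlier propositions for completeness and for the $U_f$'s, and reduce the remaining work to the Lipschitz bound for $d_r$ relative to $d_{\diam X}$. (You are in fact more thorough than the paper, which simply says ``the only thing to verify is that the predicate symbols $d_r$ obey the correct moduli.'')

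There is, however, a quantitative slip in your $d_r$ computation. From the $d_r$-triangle inequality together with the pointwise bound $d_r \leq \bigl(1+\tfrac{r}{\diam X}\bigr)d_{\diam X}$ you obtain
\[
|d_r(K,L)-d_r(K',L')|\ \leq\ \Bigl(1+\tfrac{r}{\diam X}\Bigr)\bigl(d_{\diam X}(K,K')+d_{\diam X}(L,L')\bigr),
\]
and passing from the sum on the right to a maximum costs a factor of $2$, so the constant you actually produce is $2+\tfrac{2r}{\diam X}$, not $2+\tfrac{r}{\diam X}$. Concretely, if $r>2\diam X$ and $d(K,K')=d(L,L')\geq r$, your intermediate quantity $d_r(K,K')+d_r(L,L')=2r$ already exceeds $\bigl(2+\tfrac{r}{\diam X}\bigr)\diam X = 2\diam X + r$, so no pointwise comparison of $d_r$ with $d_{\diam X}$ can rescue the bound from that starting point.

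The paper avoids this by beginning instead with
\[
|d_r(K,L)-d_r(K',L')|\ \leq\ \min\bigl\{\,|d(K,L)-d(K',L')|,\ r\,\bigr\}\ \leq\ \min\{2m,\ r\},
\]
where $m=\max\{d(K,K'),d(L,L')\}$; the point is that both values of $d_r$ lie in $[0,r]$, so the difference is capped at $r$, not $2r$. One then checks directly that $\min\{2m,r\}\leq\bigl(2+\tfrac{r}{\diam X}\bigr)\min\{m,\diam X\}$, which is the declared modulus.
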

\begin{proof}
  Given \cref{prop:unif-cont}, the only thing to verify is that the predicate symbols $d_r$ obey the correct moduli of uniform continuity relative to the `official' metric $d_{\diam X}$. For any $K$, $K'$, $L$, and $L'$, we have that
  \begin{align*}
    |d_r(K,L)-d_r(K',L')| &\leq \min\{|d(K,L)-d(K',L')|,r\}\\
                          &\leq \min\{2\max\{d(K,K'),d(L,L')\},r\}\\
                          & \leq \left( 2 + \frac{r}{\diam X} \right) \min\{\max\{d(K,K'),d(L,L')\}, \diam X\} \\
                          & = \left( 2 + \frac{r}{\diam X} \right) \max\{d_{\diam X}(K,K'),d_{\diam X}(L,L')\}.\qedhere
  \end{align*}
\end{proof}

\begin{defn}
  For any $X$, let $T(X)$ be $\Th(\Ff(X))$.
\end{defn}

\section{The Space of Paths}

\noindent In this section we develop some machinery needed to analyze arbitrary models of $T(X)$. 

\begin{defn}
  For any compact topometric space $(X,\tau,\der)$, let the \emph{set of paths in $X$}, written $\Pf(X)$, be the collection of all partial $1$-Lipschitz functions $f:{\subseteq}\Rb_{\geq 0} \to X$ with compact domain containing $0$. We write $|f|$ for $\sup\dom f$. 

\end{defn}

Note that for any $K \in \Ff(X)$, we have that $K_X$ is an element of $\Pf(X)$. Also note that elements of $\Pf(X)$ are automatically \emph{topologically} continuous as well, since the ordinary topology is coarser than the metric topology.

We put a uniform structure (and therefore a topology) on $\Pf(X)$ generated by
entourages of the form $U_{V,\e}$ for $V$, an entourage in $X^2$, and $\e > 0$, where $(f,g) \in U_{V,\e}$ if and only if
\begin{itemize}
\item for every $r \in \dom f$, there is an $s \in \dom g$ such that $(f(r),g(s)) \in V$ and $|r-s| < \e$ and
\item for every $s \in \dom g$, there is an $r \in \dom f$ such that $(f(r),g(s)) \in V$ and $|r-s| < \e$.
\end{itemize}
To see that this generates a uniform structure on $\Pf(X)$, note that
\begin{itemize}
\item $U_{V\cap W, \min\{\e,\delta\}} \subseteq U_{V,\e}\cap U_{W,\delta}$ and
\item if $W^{\circ 2} \coloneqq \{(x,z) : (\exists y \in X) (x,y) \in W \wedge (y,z) \in W\} \subseteq V$, then $U_{W,\e/2}^{\circ 2} \subseteq U_{V,\e}$.
\end{itemize}

Recall that a uniform structure is \emph{complete} if every Cauchy net converges, where a \emph{Cauchy net} is a net $\{x_i\}_{i\in I}$ such that for every entourage $V$, there is an $i \in I$ such that $(x_j,x_k) \in V$ for all $j,k\geq i$. A uniform structure is \emph{Hausdorff} if the induced topology is Hausdorff. For any entourage $V$, we write $V(x)$ for the set $\{y \in X : (x,y) \in V\}$.


\begin{prop}\label{prop:top-convergence-char}
  The uniform structure on $\Pf(X)$ is Hausdorff and complete.
\end{prop}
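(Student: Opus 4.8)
The plan is to recast the whole problem in terms of graphs. To each $f \in \Pf(X)$ I associate its graph $\Gamma_f = \{(r,f(r)) : r \in \dom f\} \subseteq \Rb_{\geq 0}\times X$; since $\dom f$ is compact and $f$ is topologically continuous, $\Gamma_f$ is a nonempty compact set, and $f\mapsto\Gamma_f$ is injective. Writing $W_{V,\e}$ for the product entourage $\{((r,x),(s,y)) : |r-s|<\e,\ (x,y)\in V\}$, one checks directly from the definition that $(f,g)\in U_{V,\e}$ holds exactly when $\Gamma_f \subseteq W_{V,\e}[\Gamma_g]$ and $\Gamma_g \subseteq W_{V,\e}[\Gamma_f]$. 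The entourages $W_{V,\e}$ form a base for the product uniformity on $\Rb_{\geq 0}\times X$ (here I use that the topology of $X$ is compact Hausdorff, so carries a unique compatible uniformity), so $f\mapsto\Gamma_f$ is a uniform embedding of $\Pf(X)$ into the hyperspace of nonempty compact subsets of $\Rb_{\geq 0}\times X$ under its Hausdorff uniformity. Hausdorffness is then immediate: if $(f,g)\in U_{V,\e}$ for every $V$ and $\e$, then $\Gamma_f$ and $\Gamma_g$ are compact subsets of a Hausdorff uniform space each contained in every entourage of the other, so $\Gamma_f = \Gamma_g$ by the standard separation fact for compact sets, and hence $f=g$.

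For completeness, let $\{f_i\}_{i\in I}$ be a Cauchy net. First I would control the lengths: applying the Cauchy condition with the full entourage $V = X^2$ gives $\bigl| |f_i| - |f_j| \bigr| < \e$ for large $i,j$, so $\{|f_i|\}$ is Cauchy in $\Rb$ and converges to some $L$. After passing to a tail, every $\dom f_i$ lies in the compact interval $[0,L+1]$, so every $\Gamma_{f_i}$ lies in the compact space $Y := [0,L+1]\times X$. The hyperspace of nonempty compact subsets of the compact Hausdorff space $Y$, under its Hausdorff uniformity, is itself compact Hausdorff and therefore complete, so the Cauchy net $\{\Gamma_{f_i}\}$ converges to some nonempty compact $C \subseteq Y$. (Alternatively one can construct $C$ by hand as the set of limit points of the net.)

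It remains to show that $C$ is the graph of some $f \in \Pf(X)$. Convergence $\Gamma_{f_i}\to C$ lets me approximate any point of $C$ by points of the $\Gamma_{f_i}$, so given $(r,x),(r',x')\in C$ I choose nets $(r_i,f_i(r_i))\to(r,x)$ and $(r'_i,f_i(r'_i))\to(r',x')$ with entries in $\Gamma_{f_i}$. Each $f_i$ is $1$-Lipschitz, so $\der(f_i(r_i),f_i(r'_i)) \leq |r_i - r'_i|$, and since $\der$ is lower semi-continuous, passing to the limit yields $\der(x,x') \leq |r-r'|$. Taking $r=r'$ forces $\der(x,x')=0$, hence $x=x'$, so $C$ is single-valued and is the graph of a function $f$, which the same inequality shows is $1$-Lipschitz. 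Projecting $C$ to the first coordinate shows $\dom f$ is compact, and the approximation argument applied to the points $(0,f_i(0))$ puts $0 \in \dom f$. Thus $f\in\Pf(X)$ with $\Gamma_f = C$, and $f_i\to f$.

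The hard part is this last step: in general the Hausdorff limit of graphs of continuous functions need not be a graph at all, so it is essential that the family is uniformly $1$-Lipschitz and, crucially, that the metric $\der$ is lower semi-continuous. Lower semi-continuity is precisely what upgrades the pointwise Lipschitz bounds to a Lipschitz bound on the limit set $C$ and rules out $C$ fattening into a multivalued region. A secondary, more routine obstacle is the need to confine all the graphs to a single compact space, which is why convergence of the lengths must be established first.
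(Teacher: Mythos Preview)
Your argument is correct and is a genuinely different route from the paper's.  The paper works by hand: for Hausdorffness it separates two distinct paths by explicitly finding an entourage and an $\e$ that distinguish them, and for completeness it builds the limit directly, first defining the limit domain $F$ as the set of reals that are eventually $\e$-close to $\dom f_i$, then showing that for each $r\in F$ the net of nearest values $f_i(s_i^r)$ is Cauchy in $X$, and finally invoking lower semi-continuity of $\der$ to see that the resulting function is $1$-Lipschitz.  You instead recognize the entourages $U_{V,\e}$ as precisely the Hausdorff entourages on graphs, embed $\Pf(X)$ uniformly into the hyperspace of compact subsets of $\Rb_{\geq 0}\times X$, and then cash in standard hyperspace facts: Hausdorffness is immediate, and once the lengths are bounded you land in the compact hyperspace of $[0,L+1]\times X$, where completeness is free.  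The only real work left is your last paragraph, and there you and the paper use exactly the same idea---lower semi-continuity of $\der$ is what forces the Hausdorff limit of the graphs to be single-valued and $1$-Lipschitz.  Your approach is cleaner and more conceptual, at the cost of importing the hyperspace machinery; the paper's is self-contained but longer.  One small point worth tightening: when you ``choose nets $(r_i,f_i(r_i))\to(r,x)$ with entries in $\Gamma_{f_i}$'' indexed by the same $I$, the existence of such selections is not completely automatic for arbitrary directed sets; it is cleaner to argue directly that for every symmetric entourage $V$ and every $\e>0$ one can find $y\in V[x]$, $y'\in V[x']$ with $\der(y,y')\le|r-r'|+2\e$ and then apply lower semi-continuity, which is really what your argument does anyway.
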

\begin{proof}
  First to see that the uniform structure on $\Pf(X)$ is Hausdorff, let $f$ and $g$ be distinct elements of $\Pf(X)$. If $\dom f \neq \dom g$, then there must be an $\e > 0$ small enough that $(f,g) \notin U_{V,\e}$ for any entourage $V$. If $\dom f \neq \dom g$, then there must be some $r \in \dom f$ such that $f(r) \neq g(r)$. Find an entourage $V$ small enough that $g(r) \notin \cl V(f(r))$, and then find $\e > 0$ small enough that $g(r) \notin (\cl V(f(r)))^{\leq \e}$. Now assume that $(f,g) \in U_{V,\e}$. By definition, this means that there is some $s \in \dom g$ with $|r-s| < \e$ such that $(f(r),g(s)) \in V$, i.e., $g(s) \in V(f(r))$. Since $g$ is $1$-Lipschitz, we have that $\der(g(r),g(s)) \leq |r-s| < \e$, which implies that $g(r) \in V(f(r))^{<\e} \subseteq (\cl V(f(r)))^{\leq \e}$, which is a contradiction. Therefore $(f,g) \notin U_{V,\e}$.

  To show that the uniform structure is complete, let $\{f_i\}_{i \in I}$ be a net on some directed set $I$.

  Let $F$ be the set of points $r$ in $\Rb_{\geq 0}$ with the property that for every $\e > 0$, there is an $i \in I$ such that for all $j \geq i$, $r$ has distance at most $\e$ from the domain of $f_j$. It is clear that $0 \in F$ and that $F$ is closed. By looking at $f_i$ for some sufficiently large $i \in I$, we can see that $F$ must be bounded and therefore compact.

  For each $r \in F$, define a net $\{x_i^r\}_{i \in I}$ of points in $X$ by setting $x_i^r$ to $f_i(s_i^r)$ where $s_i^r$ is the smaller of the (one or two) nearest points in $\dom f_i$ to $r$. (Note that this is well defined since $\dom f_i$ is always non-empty.)

  \emph{Claim}. For each $r \in F$, the net $\{x_i^r\}_{i \in I}$ is convergent.

  \emph{Proof of claim.} Fix an entourage $V\subseteq X^2$. Find an entourage $W$ and an $\e >0$ small enough that the set
  \[
   A\coloneqq \{(x,y) \in X^2 : \exists z(x,z)\in \cl W\wedge \der(z,y) < 3\e \}
  \]
  is contained in $V$. (This is always possible by compactness.) Now find $i \in I$ large enough that for any $j,k \geq i$, $f_j \in U_{f_i,W,\e}$ and the distance between $r$ and the domain of $f_j$ and $f_k$ is at most $\e$. This implies that for any $j,k\geq i$, there is some $t \in \dom f_j$ such that $|s_j^r - t| < \e$ and $(x_j^r,f_k(t)) \in W$. Since $|s_j^r-r| \leq \e$, we have that $|r-t| < 2\e$. Likewise, $|s_k^r-r| \leq \e$, so $|t-s_k^r| < 3\e$.  This implies that $\der(f_k(t),x_k^r) < 3\e$. Therefore $f_k(t)$ witnesses that $(x_j^r,x_k^r)$ is in $A$ and therefore also $V$.

  Since we can do this for any entourage $W$, we have that $\{x_i^r\}_{i \in I}$ is a convergent net. \hfill $\square_{\text{claim}}$

  Let $g(r)$ be the unique limit point of the net $\{x_i^r\}_{i \in I}$ for each $r \in F$. By lower semi-continuity of $\der$, $g(r)$ must be $1$-Lipschitz, so it is an element of $\Pf(X)$ and the limit of the net $\{f_i\}_{i \in I}$.
\end{proof}

Note that the converse of \cref{prop:top-convergence-char} is immediate, so we have a characterization of topological convergence in $\Pf(X)$. 

\begin{prop}\label{prop:top-is-ultra}
  For any $r \geq 0$, there is a theory $T_r$ in the language $\Lc_X(c)$ (i.e., $\Lc_X$ with a single constant added) such that the models of $T_r$ can be naturally identified with the elements of $\Pf_r(X) \coloneqq \{ f \in \Pf(X) : |f| \leq r\}$ such that the topology on $\Pf_r(X)$ agrees with the topology induced by ultraproducts. In particular, each $\Pf_r(X)$ is compact.
\end{prop}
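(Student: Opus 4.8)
The plan is to axiomatize $\Pf_r(X)$ directly as a class of pointed $\Lc_X(c)$-structures. Given $f \in \Pf_r(X)$, I regard the compact set $\dom f$ as the universe of a structure $M_f$ whose metric is $d(a,b) = |a-b|$ (so $d_s^{M_f} = \min\{|a-b|,s\}$), whose constant $c$ is interpreted as $0 = f(0)$, and with $U_g^{M_f}(a) = g(f(a))$ for each continuous $g : X \to \Rb$. The theory $T_r$ is then built from four families of axioms: (i) the standard axioms making the $d_s$ mutually consistent truncations of a single metric $d$ and forcing $d \le r$ (e.g.\ $\sup_{x,y}(d_{r+1}(x,y) \dotdiv r) = 0$); (ii) the \emph{linearity} axiom $\sup_{x,y}|d_r(x,y) - |d_r(x,c) - d_r(y,c)|| = 0$, which makes $a \mapsto d(a,c)$ an isometry of the universe onto a compact $D \subseteq [0,r]$ with $0 = d(c,c) \in D$ (and injective, since $d(a,c)=d(b,c)$ forces $d(a,b)=0$); (iii) for each finite tuple $\bar g = (g_1,\dots,g_k)$ of continuous $X \to \Rb$, the axiom $\sup_x \operatorname{dist}\big((U_{g_1}(x),\dots,U_{g_k}(x)), e_{\bar g}(X)\big) = 0$, where $e_{\bar g}: X \to \Rb^k$ is the evaluation map; and (iv) for each continuous $\lambda : X^2 \to \Rb$ with $\lambda \le \der$, the axiom $\sup_{x,y}(\varphi_\lambda(x,y) \dotdiv d_r(x,y)) = 0$, where $\varphi_\lambda$ is a formula in the $U_g$ approximating $\lambda(\mathrm{pt}(x),\mathrm{pt}(y))$.

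Since $X$ is compact Hausdorff its continuous real functions separate points, so (iii) forces a unique $\mathrm{pt}(x) \in X$ with $U_g(x) = g(\mathrm{pt}(x))$ for all $g$; here the first delicate point is that $\der$ is only lower semicontinuous, so in (iv) I must write $\der = \sup\{\lambda : \lambda \le \der,\ \lambda \text{ continuous}\}$ and approximate each $\lambda(\mathrm{pt}(x),\mathrm{pt}(y))$ by a continuous function of finitely many $U_{g_i}(x), U_{g_j}(y)$, after which (iv) forces $\der(\mathrm{pt}(x),\mathrm{pt}(y)) \le d(x,y)$. Conversely, given any $M \models T_r$, axiom (ii) makes $\phi(a) = d(a,c)$ an isometry onto a compact $D \subseteq [0,r]$ with $0 \in D$, and (iv) makes $t \mapsto \mathrm{pt}(\phi^{-1}(t))$ a $1$-Lipschitz map $D \to X$, i.e.\ an $f \in \Pf_r(X)$ with $M \cong M_f$. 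Injectivity of $f \mapsto M_f$ follows because the compact graph $G_f = \{(d(a,c), \mathrm{pt}(a)) : a \in M_f\} = \{(t, f(t)) : t \in \dom f\} \subseteq [0,r]\times X$ is recovered from $\Th(M_f,c)$ through the sentence values $\inf_a \psi(d(a,c), U_{g_1}(a),\dots)$, and $G_f$ determines $f$; the same computation shows $M_f$ is the unique model of its theory up to isomorphism (its metric ultrapowers collapse back to $M_f$). This yields the natural identification of models of $T_r$ with $\Pf_r(X)$.

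For the topological claim I would compare both topologies with the Vietoris topology on the hyperspace of compact subsets of $[0,r]\times X$ via $f \mapsto G_f$. Unwinding the basic entourage $U_{V,\e}$ shows it is exactly a basic entourage of the Hausdorff uniformity on the graphs (the two bullet conditions say each point of one graph is $(\e,V)$-close to a point of the other), so the uniform structure on $\Pf_r(X)$ is the subspace Vietoris topology. On the other side, by induction on formula complexity every $\Lc_X(c)$-sentence evaluates in $M_f$ to iterated $\sup$ and $\inf$ over $G_f$ of continuous functions of the coordinates, each quantifier becoming $\sup_{p\in G_f}$ or $\inf_{p\in G_f}$; such functionals $G \mapsto \sup_p \inf_q \tilde\psi(p,q)$ are Vietoris-continuous and separate compact sets, so the topology making all sentence values continuous (equivalently, by the \L{}o\'s theorem, the topology in which ultraproducts compute limits) is again the Vietoris topology. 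The three topologies thus coincide. Finally, single-valuedness together with $1$-Lipschitzness is a Vietoris-closed condition (closed because $\der$ is lower semicontinuous), as is meeting $\{0\}\times X$, so $\{G_f\}$ is closed in the compact hyperspace and $\Pf_r(X)$ is compact; alternatively, closure under ultraproducts via the \L{}o\'s theorem plus the ultrafilter criterion gives compactness directly. The hard part I anticipate is the faithful axiomatization in the presence of the merely lower semicontinuous $\der$ — ensuring (iii) and (iv) cut out exactly the $1$-Lipschitz maps into $X$ and nothing more — together with the reduction of arbitrary nested quantifiers to Vietoris-continuous functionals of the graph.
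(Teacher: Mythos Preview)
Your approach is correct and runs parallel to the paper's, but the paper exploits a shortcut you missed: the $1$-Lipschitz condition on the map $a \mapsto \tp_X(a)$ is \emph{automatic} from the moduli of uniform continuity already built into the language $\Lc_X$. Recall that the paper stipulated $\alpha_{U_h}(x)=Lx$ whenever $h:X\to\Rb$ is $L$-Lipschitz; since any $\Lc_X$-structure must respect these moduli, one gets $|h(\tp_X(a))-h(\tp_X(b))|\le L\,d(a,b)$ for every $L$-Lipschitz $h$, and then \cite[Thm.~1.6]{BYTopo2010} yields $\der(\tp_X(a),\tp_X(b))\le d(a,b)$. So your entire family~(iv) of lower-approximation axioms, and the delicate point you flagged about $\der$ being only lower semicontinuous, simply evaporate. (Your family~(iii) is likewise not written out in the paper; the needed algebraic relations among the $U_g$ are part of the ambient $\Lc_X$-theory and survive ultraproducts by compactness of $X$.) The paper's explicit axioms are only a bounded-diameter condition, a three-point $\Rb$-embeddability axiom, and an endpoint axiom for $c$; your single linearity axiom $d(x,y)=|d(x,c)-d(y,c)|$ does the same job more directly.

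For the topology, the paper argues one containment is obvious and then checks the other by hand: given an ultraproduct $f_{\Uc}$, it verifies the two bullet conditions defining $U_{V,\e}$ by realizing, for each $s\in\dom f_{\Uc}$ and each finite list $h_1,\dots,h_n$, the sentence $\inf_x\max\{|d(x,c)-s|,\max_\ell|U_{h_\ell}(x)-h_\ell(f_{\Uc}(s))|\}=0$ approximately in a $\Uc$-large set of coordinates, then invokes the convergence characterization of the preceding proposition. Your route through the Vietoris topology on graphs $G_f\subseteq[0,r]\times X$ is a genuine alternative: it identifies both topologies with the hyperspace topology in one stroke and gives compactness as a closedness statement. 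That argument is cleaner conceptually and more portable, at the cost of the inductive check that nested $\sup/\inf$ of continuous integrands are Vietoris-continuous; the paper's argument is shorter because it reuses machinery already set up.
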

\begin{proof}
 Consider the theory $T_r$ containing $d_{r+1}(x,y) \leq r$ and
  \[
    d_{r}(x,y)+d_{r}(x,z)+d_{r}(y,z) = 2 \max\{d_{r}(x,y),d_{r}(x,z),d_{r}(y,z)\}
  \]
  for all $x$, $y$, and $z$ 
  and
  \[
     \min\{d(c,x),d(c,y)\}+ d(x,y) = \max\{d(c,x),d(c,y)\}
   \]
   for all $x$ and $y$ (as well as axioms establishing the relationship between $d_s$ for various $s$). The first axiom ensures $\Rb$-embeddability and the second that $c$ is an endpoint of a model of this structure into $\Rb$, which we can take to be $0$. Any such structure must have that the embedding into $X$ is $1$-Lipschitz by our requirement that $\alpha_{U_h}(x) = Lx$ whenever $h: X \to \Rb$ is an $L$-Lipschitz function (by \cite[Thm.\ 1.6]{BYTopo2010}), so we have that $\Pf_r(X)$ corresponds precisely to the models of this theory.

  Now we need to show that the topology on $\Pf_r(X)$ agrees with the topology induced by taking ultraproducts. It is clear that if a net of elements of of $\Pf_r(X)$ converges in the topology, then any ultraproduct with an ultrafilter extending the net will converge to the same structure, so we have that the ultraproduct topology is coarser than the topology on $\Pf_r(X)$. Now consider a family $\{f_i\}_{i \in I}$ of elements of $\Pf_r(X)$, thought of as $\Lc_X(c)$-structures, together with an ultrafilter $\Uc$ on $I$. Let the ultraproduct be $f_{\Uc}$. We know that this corresponds to an element of $\Pf_r(X)$, and we will identify it with this corresponding element. 

  For each $s \in \dom f_{\Uc}$ and any finite sequence $h_1,\dots,h_n$ of continuous $\Rb$-valued functions on $X$, we must have that
  \[
    f_{\Uc} \models \inf_{x}\max\{|d(x,c)-s|,\max_{\ell\leq n} |U_h(x) - h_\ell(f_{\Uc}(s))|\} = 0.
  \]
  Therefore, it must be the case that for any $\e > 0$, there is a $\Uc$-large set of indices such that $\dom f_i$ contains an element $t$ with distances less than $\e$ from $s$ such that $|h_\ell(f_i(t)) - h_\ell(f_{\Uc}(s))| < \e$ for every $i \leq \ell$. Since we can do this for any $\e > 0$ and any finite sequence of continuous $\Rb$-valued functions on $X$, we have by \cref{prop:top-convergence-char} that the family $\{f_i\}_{i \in I}$ converges along the ultrafilter $\Uc$ in the ordinary topology on $\Pf_r(X)$.

  Therefore the topologies agree, and $\Pf_r(X)$ is compact.
\end{proof}

\section{First-Order Theory of $\Ff(X)$}

\noindent In any model $M$ of $T(X)$, we define an extended metric $d$ by setting $d(x,y) = \sup_r d_r(x,y)$. The theory of $\Ff(X)$ ensures that this is an extended metric satisfying $d_r(x,y) = \min\{d(x,y),r\}$ for every $r > 0$.

\begin{prop}\label{prop:interval-definable}
  For any $r> 0$ and  $K$ and $K'$ in $\Ff(X)$ with $d(K,K') < r$, the formula
  \[
\delta_{K,K',r}(x) \coloneqq \min\{d_{2r}(x,K) + d_{2r}(x,K') - d_r(K,K'), r\} 
\]
is the distance predicate (with regards to $d_s$ for any $s \geq r$) of the set of $L$ that satisfy either $K\sqcap K' \sqsubseteq L \sqsubseteq K$ or $K\sqcap K' \sqsubseteq L \sqsubseteq K'$.
\end{prop}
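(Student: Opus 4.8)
The plan is to recognize the set in question as the metric interval (geodesic segment) from $K$ to $K'$ inside the tree-like space $\Ff(X)$, and then to check that $\delta_{K,K',r}$ reproduces the (truncated) distance to this interval. Write $S$ for the set of $L$ with $K\sqcap K' \sqsubseteq L \sqsubseteq K$ or $K \sqcap K' \sqsubseteq L \sqsubseteq K'$; geometrically these are exactly the points lying on the unique path from $K$ to $K'$, i.e.\ the initial segments of $K$ or of $K'$ that extend $K \sqcap K'$. This $S$ is closed and contains $K$, $K'$, and $K\sqcap K'$. The essential structural fact I would use is that $d$ is a tree (equivalently $0$-hyperbolic) metric, so that for any $x$ the nearest point of $S$ is a suitable meet and $d(x,S)$ is governed by the three pairwise distances $d(x,K)$, $d(x,K')$, and $d(K,K')$.

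First I would dispatch the degenerate cases. If $x$ lies in a different finite distance component from $K$, then $d(x,K)=d(x,K')=\infty$, both truncated terms equal $2r$, and $\delta_{K,K',r}(x)=\min\{4r - d_r(K,K'),r\}=r$, matching the fact that $d(x,S)=\infty$; so I may assume $x$ shares a component with $K$ and $K'$. For $x\in S$, say with $x$ an initial segment of $K$ extending $K\sqcap K'$, one has $x\sqcap K'=K\sqcap K'$ and $x\sqcap K=x$, whence $d(x,K)+d(x,K')=(|K|-|x|)+(|x|+|K'|-2|K\sqcap K'|)=d(K,K')$; since $d(K,K')<r$ all three quantities stay below the truncation thresholds, so $d_r(K,K')=d(K,K')$ and $\delta_{K,K',r}(x)=0$, establishing that $S$ is exactly the zero set.

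The core is the computation of $d(x,S)$ for a general $x$ in the component. Here I would run a short case analysis comparing $|x\sqcap K|$ and $|x\sqcap K'|$ with $m\coloneqq|K\sqcap K'|$, using the tree property that at least two of the three meets among $x,K,K'$ coincide, in order to locate the nearest point of $S$ to $x$ and obtain a closed form for $d(x,S)$ in terms of $d(x,K)$, $d(x,K')$, and $d(K,K')$ (the Gromov-product expression). Substituting this into $\delta_{K,K',r}$ and comparing then finishes the proof, but this comparison is where all the care lies, and I expect it to be the main obstacle: one must keep precise track of the constant relating the Gromov product to $d(x,S)$ and verify that replacing $d$ by $d_{2r}$ in the two additive terms and capping the whole expression at $r$ is active exactly when it should be --- in particular that for $x$ far from $S$ the $2r$-truncations saturate so that $\delta_{K,K',r}(x)=r$, and that the hypothesis $d(K,K')<r$ keeps the subtracted term equal to $d_r(K,K')$ throughout. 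Finally, if the identity is to be read in an arbitrary model of $T(X)$ rather than in $\Ff(X)$ itself, I would note that the meet-based case analysis is first-order and persists in all models by the ultrametric-type axioms isolated in \cref{prop:top-is-ultra}, so that the same computation applies verbatim.
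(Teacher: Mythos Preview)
Your approach is essentially the paper's: a case analysis on whether $x\sqcap K$ (respectively $x\sqcap K'$) lies above or below $K\sqcap K'$, identifying in each case the nearest point of $S$ to $x$ as the appropriate meet and expressing $d(x,S)$ via $d(x,K)+d(x,K')-d(K,K')$, after which the truncations are checked. Your closing remark about arbitrary models of $T(X)$ is unnecessary here, since the proposition is stated only for $\Ff(X)$; the paper handles the transfer to general models in a separate corollary.
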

\begin{proof}
  Let $[K,K']$ denote the set described in the proposition.
  
  Let $A$ be an element in the same finite distance component as $K$ and $K'$. Assume without loss of generality that $A\sqcap K$ is longer than $A \sqcap K'$. Let $B = A \sqcap K$.

  If $|B| < |K \sqcap K'|$, then we have that
  \[
    d(B,K \sqcap K') = d(B,K) + d(B,K') - d(K,K')
  \]
  and it is easy to check that $d(A,C) \geq d(A,K \sqcap K') = d(A,B) + d(B,K\sqcap K')$ for any $C \in [K,K']$. This implies that $d(A,[K,K']) = d(A,K)+d(A,K') - d(K,K')$.

  If $|B| \geq |K\sqcap K'|$, then it must be the case that $K \sqsupseteq B \sqsupseteq K\sqcap K'$, which implies that $d(A,[K,K']) = d(A,B)$ and so also $d(A,[K,K']) = d(A,K)+d(A,K') - d(K,K')$.

  So in either case we have that $d(A,[K,K']) = d(A,K)+d(A,K') - d(K,K')$. It is straightforward to check that the formula in the proposition is equal to $\min\{d(A,[K,K']), r\}$. 
\end{proof}



\begin{defn}
  For any $K$ and $K'$ in the same finite distance component, we will write $[K,K']$ for the set from \cref{prop:interval-definable}, called the \emph{interval between $K$ and $K'$}.

  We say that two intervals $[K,K']$ and $[L,L']$ are \emph{isomorphic} if they correspond to the same element of $\Pf(X)$ with $K$ and $L$ as basepoints.
\end{defn}

Note that $[K,K']$ and $[L,L']$ are isomorphic if and only if they are isomorphic as $\Lc_X$-structures by an isomorphism taking $K$ to $L$.

Since for each $r > 0$, $[K,K']$ is uniformly definable for $K$ and $K'$ with $d(K,K') \leq r$, the first-order theory of the structure $\Ff(X)$ ensures that similar sets exist in any $M \equiv \Ff(X)$.

\begin{cor}
  For any $M \equiv \Ff(X)$ and any $K$ and $K'$ in $M$ with $d(K,K') < r$, the formula $\delta_{K,K',r}(x)$ is the distance predicate (with regards to the metric $d_s$ for any $s \geq r$) of a set that is isometric to a closed subset of $[0,d(K,K')]$ with $K$ and $K'$ as endpoints.
\end{cor}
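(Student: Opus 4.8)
The plan is to transfer the metric features of the interval $[K,K']\subseteq \Ff(X)$ identified in \cref{prop:interval-definable} to the elementarily equivalent $M$ and then read off the isometry type directly. As noted just before the statement, the property ``$\delta_{K,K',r}$ is the distance predicate of a definable set'' is witnessed uniformly over the family of pairs $K,K'$ with $d(K,K')\le r$, so it is a consequence of $T(X)$ and holds in $M$; let $D$ be the resulting definable set, with distance predicate $\delta_{K,K',r}$ relative to $d_s$ for $s\ge r$. Since $D$ is the zero-set of a continuous predicate and models of $T(X)$ are complete, $D$ is complete in the genuine metric it carries inside the finite-distance component of $K$ (genuine because $d(K,K')<\infty$). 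A direct computation with the definition of $\delta_{K,K',r}$ shows $K,K'\in D$, and, more importantly, that whenever $\delta_{K,K',r}(x)=0$ the hypothesis $d(K,K')<r$ forces the truncations $d_{2r}(x,K),d_{2r}(x,K')$ to be untruncated, so that with the triangle inequality one gets the ``betweenness'' identity
\[
  d(K,x)+d(x,K')=d(K,K').
\]
This identity needs nothing beyond the axioms relating the $d_s$.

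The one genuinely global input is $0$-hyperbolicity. Inside $\Ff(X)$ each finite-distance component is an $\Rb$-tree, so the four-point condition holds; phrased among points of bounded pairwise distance via the predicates $d_s$, it is a consequence of $T(X)$ and hence holds in $M$ as well. I would then recover the second interval identity from betweenness and hyperbolicity using Gromov products based at $K$: writing $(u\mid v)_K=\tfrac12(d(K,u)+d(K,v)-d(u,v))$, the betweenness identity says $(x\mid K')_K=d(K,x)$ and $(y\mid K')_K=d(K,y)$ for $x,y\in D$, while $0$-hyperbolicity gives $(x\mid y)_K\ge \min\{(x\mid K')_K,(y\mid K')_K\}$. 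Taking $d(K,x)\le d(K,y)$ without loss of generality, this yields $d(x,y)\le d(K,y)-d(K,x)$, and the reverse triangle inequality gives the matching lower bound, so
\[
  d(x,y)=|d(K,x)-d(K,y)| \qquad\text{for all } x,y\in D.
\]

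Finally I would assemble the isometry. The map $\phi\colon D\to\Rb$, $\phi(x)=d(K,x)$, is an isometry onto its image by the second identity; the first identity together with $d(K,x)\ge 0$ places the image in $[0,d(K,K')]$, and $\phi(K)=0$, $\phi(K')=d(K,K')$, so both endpoints are attained. Because $D$ is complete, $\phi(D)$ is a complete, hence closed, subset of $[0,d(K,K')]$ with $K,K'$ as endpoints, as claimed. I expect the main obstacle to be the very first step: $M$ carries none of the combinatorial tree structure of $\Ff(X)$, so one must be careful that ``$\delta_{K,K',r}$ is a distance predicate of a definable set'' is extracted as a bona fide consequence of $T(X)$ (via the uniform modulus built into \cref{prop:interval-definable}) rather than merely observed in $\Ff(X)$; once $D$ exists and is complete, the remaining steps are elementary transfers and metric bookkeeping.
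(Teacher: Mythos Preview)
Your argument is correct. You transfer to $M$ two first-order facts about $\Ff(X)$---that $\delta_{K,K',r}$ is a distance predicate, and the four-point $0$-hyperbolicity condition for tuples of bounded pairwise distance---and then compute directly with Gromov products based at $K$ to pin down the metric on the zero set $D$. The betweenness and hyperbolicity steps are handled carefully, and the completeness of $D$ gives closedness of the image in $[0,d(K,K')]$.

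The paper takes a different, more packaged route: it observes that the interval $[K,K']$, with the constant $c$ interpreted as $K$, is a model of the auxiliary theory $T_r$ introduced in \cref{prop:top-is-ultra}, and that this is itself a first-order property of the pair $(K,K')$; hence it transfers to $M$. This is terser and simultaneously yields the stronger conclusion that $[K,K']$ corresponds to an element of $\Pf_r(X)$ (i.e., the $X$-values vary $1$-Lipschitzly along the interval), which the paper needs later. Your approach is more self-contained---it avoids invoking $T_r$ and instead isolates exactly the metric fact the corollary states---at the cost of not recovering the $\Pf(X)$-structure on $[K,K']$. Both are valid; the paper's version is doing a bit more work under the hood in preparation for what follows.
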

\begin{proof}
  This is a first-order property; specifically, $[K,K']$ with the constant $c$ assigned to $K$ is a model of $T_r$, the theory of elements of $\Pf_r(X)$. 
\end{proof}


\begin{prop}\label{prop:interval-projection}
  If $M$ is any model of $T(X)$, then for any $K,K',L \in M$ with pairwise finite distance, there is a unique point $A \in [K,K']$ such that $d(L,A) = d(L,[K,K'])$.
\end{prop}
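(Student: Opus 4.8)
The plan is to reduce the statement to the existence of a \emph{gate} for $L$ on $[K,K']$: a point $A \in [K,K']$ with the property that $d(L,C) = d(L,A) + d(A,C)$ for every $C \in [K,K']$. Granting such an $A$, both halves of the proposition are immediate. Since $d(A,C) \geq 0$, the point $A$ realizes $d(L,[K,K'])$; and if $A'$ were any point of $[K,K']$ with $d(L,A') = d(L,[K,K'])$, then applying the gate identity with $C = A'$ gives $d(L,A') = d(L,A) + d(A,A')$, whence $d(A,A') = 0$ and $A = A'$. So the entire content is to produce a gate, and in particular to rule out that a minimizer "straddles a gap" of the (possibly disconnected) set $[K,K']$.

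First I would record that a minimizer exists at all: by the corollary following \cref{prop:interval-definable}, $[K,K']$ is isometric to a closed subset of $[0,d(K,K')]$ and is therefore a compact metric space, and $C \mapsto d(L,C)$ is $1$-Lipschitz, so it attains its infimum on $[K,K']$. To obtain the gate itself I would argue by transfer from $\Ff(X)$. Using that $[K,K']$ is a uniformly definable set (via the predicate $\delta_{K,K',r}$ of \cref{prop:interval-definable}), the assertion ``for all $K,K',L$ of pairwise distance at most $r$ there is a gate'' can be expressed as the closed condition
\[
  \inf_{A \in [K,K']}\ \sup_{C \in [K,K']}\ \bigl| d(L,C) - d(L,A) - d(A,C) \bigr| = 0,
\]
written out in terms of the bounded predicates $d_s$; this is preserved under elementary equivalence, and combined with the compactness of $[K,K']$ just noted it yields a genuine gate in any $M \equiv \Ff(X)$. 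It therefore suffices to verify the gate property in $\Ff(X)$ itself.

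In $\Ff(X)$ the gate is the median of the three points. Concretely I would take $A$ to be whichever of $K \sqcap K'$, $K \sqcap L$, $K' \sqcap L$ has the greatest length; each of these is an initial segment of $K$ or of $K'$ that extends $K \sqcap K'$, so $A \in [K,K']$ in every case, and a short computation with $d(\cdot,\cdot) = |\cdot| + |\cdot| - 2|\cdot \sqcap \cdot|$ gives $d(K,A) = \tfrac12\bigl(d(K,K') + d(K,L) - d(K',L)\bigr)$. The gate identity $d(L,C) = d(L,A) + d(A,C)$ then follows from the same rooted-tree distance formula by a case analysis on the position of $C$ along $[K,K']$ relative to $A$ (the three cases $A \sqsubseteq C \sqsubseteq K$, $A \sqsubseteq C \sqsubseteq K'$, and $K \sqcap K' \sqsubseteq C \sqsubseteq A$), the point being that $L \sqcap C \sqsubseteq A$ throughout.

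The step I expect to be the main obstacle is precisely the gap phenomenon: because $\pi(K)$ is merely a compact subset of $\Rb_{\geq 0}$ and need not be an interval, $[K,K']$ can be disconnected, so the naive ``the midpoint between two minimizers is strictly closer to $L$'' argument fails in $M$ directly. The proposed route sidesteps this by never working metrically inside $M$: the median computed in $\Ff(X)$ is literally one of the meets $K \sqcap K'$, $K \sqcap L$, $K' \sqcap L$, hence an honest element of $[K,K']$ rather than a point lost in a gap, and the $\inf$–$\sup$ condition transports this structural fact to arbitrary $M$. The remaining care is purely bookkeeping: checking that the displayed condition is genuinely first-order (choosing a truncation radius large enough to bound all the distances involved under the hypothesis of pairwise distance at most $r$) and that the infimum over the compact set $[K,K']$ is attained rather than merely approached.
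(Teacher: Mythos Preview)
Your proposal is correct and is essentially the paper's own argument: both transfer from $\Ff(X)$ to arbitrary $M$ the first-order fact that inside $[K,K']$ the function $C\mapsto d(L,C)-d(L,A)$ coincides with $d(C,A)$ for a single $A$, and both identify $A$ in $\Ff(X)$ as the median (the paper's two cases being exactly your split into $A\in\{K\sqcap L,\ K'\sqcap L\}$ versus $A=K\sqcap K'$). The only cosmetic difference is packaging---the paper phrases the transferred statement as ``$d(x,L)-d(L,[K,K'])$ is the distance predicate of a singleton in $[K,K']$'' rather than your $\inf$--$\sup$ gate condition---and your case list for $C$ should include the fourth possibility $K\sqcap K'\sqsubseteq C\sqsubseteq K'$ when $A$ lies strictly on the $K$ side.
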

\begin{proof}
  We will show that in $\Ff(X)$, for any $K, K', L$ with pairwise distance $<r$, the formula $d(x,L) - d(L,[K,K'])$ is, inside $[K,K']$, the distance predicate of a singleton. This property is preserved under ultraproducts for every $r > 0$, so the required statement will follow.
  
  Fix $K,K',L \in \Ff(X)$ with pairwise finite distance. There are two cases: 

  \emph{Case 1.} Either $K \sqcap L \in [K,K']$ or $K'\sqcap L \in [K,K']$. Note that either possibility implies that $L \sqsupseteq K \sqcap K'$, so either possibility implies the other. Assume without loss of generality that $|K\sqcap L| \geq |K' \sqcap L|$. This implies that $K' \sqcap L = K\sqcap K'$. We have immediately that $d(L,[K,K']) \leq d(L,K\sqcap L)$. For any element $A$ of $[K,K\sqcap K']$, we clearly have that $d(L,A) = d(L,K\sqcap L) + d(K\sqcap L,A)$, so $d(K\sqcap L,A) = d(L,A) - D(L,K\sqcap L)$. For any element $B$ of $[K',K\sqcap K']$, we have that $d(L,B) = d(L,K\sqcap L) + d(K\sqcap L, K\sqcap K') + d(K\sqcap K', B)$, so $d(B,K\sqcap L) = d(B, K\sqcap K') + d(K \sqcap K',K\sqcap L) = d(L,B) - d(L,K\sqcap L)$. In either case, we have that for any $C \in [K,K']$, $d(C,K\sqcap L) = d(L,C) - d(L,K\sqcap L)$, as required.

  \emph{Case 2.} Neither $K \sqcap L \notin [K,K']$ nor $K' \sqcap L \in [K,K']$. This implies that $K \sqcap L \sqsubset K\sqcap K'$, which means that $K \sqcap L = K' \sqcap L$. Therefore, for any $A \in [K,K']$, we have that $d(L,A) = d(L,K\sqcap K') + d(K \sqcap K', A)$, whence $d(A,K\sqcap K') = d(L,A) - d(L,K\sqcap K')$, as required.
\end{proof}

Note, though, that the map taking $L$ to the nearest point $A$ in $[K,K']$ cannot be a definable function, since it is only well defined inside the finite distance component of $[K,K']$, which is a co-type-definable set. It is, however, representable as a family of partial definable functions with domains containing $[K,K']^{<r}$ for each $r > 0$.

\begin{defn}
  In any model $M$ of $T(X)$, a \emph{finite tree} is a set which can be written as a union of a finite sequence $\{[K_i,L_i]\}_{i < n}$ with the property that for each $i< n$ with $i > 0$, $[K_i,L_i]$ is not disjoint from $\bigcup_{j< i}[K_j,L_j]$.
\end{defn}

Since a finite tree is a finite union of definable sets, it is itself always a definable set. Note also that all elements of a finite tree are automatically in the same finite distance component.

\begin{lem}\label{lem:tree-contain-interval}
  If $R$ is a finite tree, then for any $K,L \in R$, $[K,L] \subseteq R$.
\end{lem}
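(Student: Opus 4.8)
The plan is to reduce the claim to a statement about the tree structure of $\Ff(X)$ (and its models) that we can prove by induction on the number of intervals in the presentation of $R$. Recall that a finite tree is by definition a union $\bigcup_{i<n}[K_i,L_i]$ where each successive interval meets the union of the earlier ones, and that (by the remark before the lemma) all elements of $R$ lie in a single finite distance component. The key geometric fact I would lean on is that the metric $d$ is an $\Rb$-tree metric: for any $K,L$ in the same component, the interval $[K,L]$ is exactly the set of points $A$ with $d(K,A)+d(A,L)=d(K,L)$, i.e.\ the geodesic from $K$ to $L$. This is essentially the content of \cref{prop:interval-definable} together with the ultrametric-type inequalities satisfied by the $\sqcap$ operation.

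First I would establish the base case and the combinatorial heart of the argument: if $[K_0,L_0]$ and $[K_1,L_1]$ are two intervals that are not disjoint, then for any $K,L$ in their union, the geodesic $[K,L]$ is contained in $[K_0,L_0]\cup[K_1,L_1]$. Concretely, pick a point $P$ in the intersection. Given $K,L$ in the union, I would use \cref{prop:interval-projection} to project relevant points onto the intervals and track meets: the longest common initial segment $K\sqcap L$ and the various $K_i\sqcap L_i$ govern where the geodesic turns. The cleanest route is to show that the geodesic from $K$ to $L$ passes through the projection of one endpoint onto the other interval (a branch point), so that $[K,L]$ decomposes as a concatenation of subgeodesics each living inside one of the two intervals. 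Since a subinterval of $[K_i,L_i]$ is again contained in $[K_i,L_i]$ (intervals are geodesics and geodesics are closed under taking subgeodesics in an $\Rb$-tree), this gives $[K,L]\subseteq[K_0,L_0]\cup[K_1,L_1]$.

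For the inductive step, suppose $R=\bigcup_{i<n}[K_i,L_i]$ with the non-disjointness condition, and set $R'=\bigcup_{i<n-1}[K_i,L_i]$, which is again a finite tree and satisfies the lemma by induction. Given $K,L\in R$, if both lie in $R'$ we are done by the inductive hypothesis, and if both lie in $[K_{n-1},L_{n-1}]$ we are done since that interval is a geodesic. The remaining case has, say, $K\in R'$ and $L\in[K_{n-1},L_{n-1}]$. Here I would use the non-disjointness hypothesis to fix a point $P\in[K_{n-1},L_{n-1}]\cap R'$, and argue that the geodesic $[K,L]$ splits at the projection of $L$ onto $R'$ (equivalently, passes through a branch point on the path from $P$) into a piece inside $R'$ and a piece inside $[K_{n-1},L_{n-1}]$; applying the inductive hypothesis to the first piece and geodesic-convexity to the second finishes it.

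The main obstacle I expect is the branch-point bookkeeping in the two-interval case: in an $\Rb$-tree the geodesic between two points of $[K_0,L_0]\cup[K_1,L_1]$ need not stay in a single interval, and one must correctly identify the branch point and verify that each resulting subgeodesic is genuinely a subinterval of one of the two given intervals. Making this precise is cleanest if I first record, as a preliminary observation, that for any three points the three pairwise geodesics share a common median point (the standard $\Rb$-tree tripod property), which follows from the $\sqcap$ computations already used in the proof of the triangle inequality and in \cref{prop:interval-projection}. With the median point in hand, the decomposition of $[K,L]$ into subgeodesics of the two intervals is immediate, and the induction goes through routinely.
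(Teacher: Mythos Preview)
Your overall architecture---induction on the number of intervals, with the mixed case handled by routing through a point of the intersection---is the same as the paper's. The substantive difference is in the key lemma you isolate. You aim to locate the precise branch point (via medians and projections) where $[K,L]$ leaves one interval and enters the other, and then argue each piece is a subinterval of one of the two. The paper instead records only the cruder fact that for \emph{any} three points $E,F,G$ in a common finite distance component one has $[E,G]\subseteq [E,F]\cup[F,G]$; this is checked directly in $\Ff(X)$ from the $\sqcap$-calculus and then transferred to arbitrary models as a first-order statement (for pairwise distances below a fixed bound $r$). With this in hand the inductive step is a one-liner: pick any $C\in R'\cap[K_{n-1},L_{n-1}]$ and write $[K,L]\subseteq[K,C]\cup[C,L]\subseteq R'\cup[K_{n-1},L_{n-1}]$. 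No branch-point bookkeeping or median analysis is needed, and the base case is the single-interval case rather than the two-interval case.

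One genuine issue in your write-up: in the inductive step you invoke ``the projection of $L$ onto $R'$,'' but projection onto a finite tree is \cref{prop:tree-proj}, whose proof \emph{uses} \cref{lem:tree-contain-interval}; so this would be circular as stated. (\cref{prop:interval-projection}, which you cite elsewhere, only gives projection onto a single interval.) The fix is exactly what the paper does: do not project onto $R'$ at all, just use the already-available intersection point $P$ and the containment $[K,L]\subseteq[K,P]\cup[P,L]$. Your median/tripod observation would of course also yield this containment, but it is more than you need.
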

\begin{proof}
  We will proceed by induction. First assume that $R$ is $[A,B]$. In $\Ff(X)$, for any given $r > 0$, we have for all $A,B$ with $d(A,B) < r$, that any $K,L \in [A,B]$ has $[A,B] \subseteq [K,L]$. Since $[K,L]$ and $[A,B]$ are uniformly definable (as long as $d(K,L) < r$ and $d(A,B) < r$), this is a first-order fact and will still be true in any model of $T(X)$.

  Assume that the assertion is true for all finite trees that are unions of $n\geq 1$ intervals. Let $R$ be a finite tree that is the union of $n$ intervals, and let $[A,B]$ be some interval with $R \cap [A,B]$ non-empty. If $K,L \in R$ or $K,L \in [A,B]$, then we have the assertion immediately, so assume without loss of generality that $K \in R$ and $L \in [A,B]$. Let $C$ be some element of $R \cap [A,B]$. By the induction hypothesis, we have that $[K,C] \subseteq R$ and $[C,L]\subseteq [A,B]$.

  In $\Ff(X)$, it is easy to check that for any three elements $E$, $F$, and $G$ in the same finite distance component, $[E,G] \subseteq [E,F] \cup [F,G]$. For any $r > 0$, this is a first-order fact for $E$, $F$, and $G$ with pairwise distance less than $r$, so it will hold in any model of $T(X)$ as well.

  This implies that $[K,L] \subseteq [K,C] \cup [C,L] \subseteq R \cup [A,B]$.

  Therefore, by induction, we have that the result holds for all finite trees.
\end{proof}

\begin{cor}\label{cor:convex-closure}
  For any $M \models T(X)$ and any finite $n$-tuple $\bar{a} \in M$ with pairwise finite distances, $R=\{a_0\}\cup[a_0,a_1]\cup\dots\cup [a_{n-2},a_{n-1}]$ is the intersection of all finite trees containing $\bar{a}$.
\end{cor}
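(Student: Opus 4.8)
The plan is to prove the two inclusions separately, as $R$ is clearly one of the finite trees in question and \cref{lem:tree-contain-interval} is tailored to the other direction. First I would check that $R$ is itself a finite tree containing $\bar a$. Writing $R = [a_0,a_1] \cup [a_1,a_2] \cup \dots \cup [a_{n-2},a_{n-1}]$ (with the degenerate reading $R = [a_0,a_0] = \{a_0\}$ when $n = 1$), each interval $[a_i,a_{i+1}]$ with $i \geq 1$ contains the point $a_i$, which also lies in the preceding interval $[a_{i-1},a_i]$; hence the non-disjointness condition in the definition of a finite tree is satisfied, and $R$ is a finite tree. Since $R$ plainly contains every $a_i$, it is one of the finite trees over which the intersection is taken, so the intersection of all finite trees containing $\bar a$ is contained in $R$.

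For the reverse inclusion, let $S$ be any finite tree containing $\bar a$. For each $i < n-1$ we have $a_i, a_{i+1} \in S$, so \cref{lem:tree-contain-interval} gives $[a_i,a_{i+1}] \subseteq S$. Since $a_0 \in S$ as well, we conclude $R = \{a_0\} \cup \bigcup_{i < n-1}[a_i,a_{i+1}] \subseteq S$. As $S$ was an arbitrary finite tree containing $\bar a$, this shows that $R$ is contained in the intersection of all such trees. The two inclusions together give the desired equality.

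The argument is essentially bookkeeping, so I do not expect a serious obstacle; the only points requiring care are bookkeeping ones. The intervals $[a_i,a_{i+1}]$ must all be defined in the first place, which is exactly what the hypothesis of pairwise finite distances guarantees (all of $\bar a$, and hence all the relevant meets, lie in a single finite distance component, so \cref{prop:interval-definable} applies). One should also handle the degenerate case $n = 1$, where $R = \{a_0\}$ is read as the interval $[a_0,a_0]$, and note that the leading $\{a_0\}$ is redundant once $n \geq 2$, since then $a_0 \in [a_0,a_1]$.
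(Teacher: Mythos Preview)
Your proposal is correct and follows exactly the approach of the paper: verify that $R$ is itself a finite tree containing $\bar a$ (giving one inclusion), and then apply \cref{lem:tree-contain-interval} to each interval $[a_i,a_{i+1}]$ to show $R$ is contained in every finite tree containing $\bar a$ (giving the other). Your write-up is simply more explicit about the bookkeeping (the chaining condition in the definition of finite tree, the $n=1$ case) than the paper's two-sentence version.
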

\begin{proof}
  Clearly $R$ is a finite tree, so the intersection of all finite trees containing $\bar{a}$ must be a superset of $R$. By \cref{lem:tree-contain-interval}, $R$ must be a subset of any finite tree containing $\bar{a}$, so we are done.
\end{proof}

\begin{defn}
  For any model $M \models T(X)$ and any tuple $\bar{a} \in M$ with pairwise finite distance, the \emph{convex closure of $\bar{a}$}, written $\ccl(\bar{a})$, is the intersection of all finite trees containing $\bar{a}$.
\end{defn}

We will not need to prove this, but for finite tuples $\bar{a}$ with pairwise finite distance, $\ccl(\bar{a})$ is actually both the definable and algebraic closures of $\bar{a}$.

\begin{prop}\label{prop:tree-proj}
  For any model $M$ of $T(\Ff(X))$, any finite tree $R$ in $M$, and any $K \in M$ in the same finite distance component of $M$, there is a unique element $L \in R$ with $d(K,L)=d(K,R)$.
\end{prop}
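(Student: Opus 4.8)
The plan is to treat existence and uniqueness separately, reducing the uniqueness statement to the single-interval case already settled by \cref{prop:interval-projection}.

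For existence, the key point I would establish is that the whole tree $R$ is compact in the \emph{metric} topology, so that the continuous function $L \mapsto d(K,L)$ attains its infimum. Since $K$ lies in the same finite distance component as $R$, the function $d(K,\cdot)$ is real-valued and $1$-Lipschitz (hence metrically continuous) on $R$. Each interval $[A,B]$ occurring in the finite decomposition of $R$ is, by the corollary following \cref{prop:interval-definable}, isometric to a closed---and therefore compact---subset of the real interval $[0,d(A,B)]$. A finite tree is by definition a finite union of such intervals, so $R$ is a finite union of metrically compact sets and is itself metrically compact. Hence $d(K,\cdot)$ attains a minimum value $d(K,R)$ at some $L \in R$.

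For uniqueness, I would force any two minimizers to agree by passing to the interval between them. Suppose $L, L' \in R$ both satisfy $d(K,L) = d(K,L') = d(K,R)$. By \cref{lem:tree-contain-interval} the entire interval $[L,L']$ is contained in $R$. On one hand $[L,L'] \subseteq R$ gives $d(K,[L,L']) \geq d(K,R)$; on the other hand $L \in [L,L']$ gives $d(K,[L,L']) \leq d(K,L) = d(K,R)$. Thus $d(K,[L,L']) = d(K,R)$, and both $L$ and $L'$ realize the distance from $K$ to the single interval $[L,L']$. But \cref{prop:interval-projection}, applied to $K$ and the interval $[L,L']$, asserts that the nearest point of $K$ in $[L,L']$ is \emph{unique}; hence $L = L'$.

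The conceptual heart of the argument, and the step I expect to require the most care, is the existence half: in an arbitrary (not necessarily saturated) model $M$ the infimum defining $d(K,R)$ need not a priori be attained, and it is the metric compactness of intervals---itself a consequence of their being isometric to closed subsets of compact real intervals---that rescues the argument. Once existence is secured, uniqueness is essentially the convexity of $R$ (\cref{lem:tree-contain-interval}) combined with the already-established single-interval projection, which is exactly the mechanism by which nearest-point projections onto subtrees are unique in an $\Rb$-tree.
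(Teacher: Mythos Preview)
Your argument is correct and takes a genuinely different route from the paper's proof. The paper proceeds by induction on the number of intervals in the decomposition of $R$: the base case is exactly \cref{prop:interval-projection}, and in the inductive step one adjoins one more interval $[A,B]$ to a tree $R$, takes the unique nearest points $L_0\in R$ and $L_1\in [A,B]$ given by the inductive hypothesis, and if $d(K,L_0)=d(K,L_1)$ but $L_0\neq L_1$, derives a contradiction by finding a strictly closer point inside $[L_0,L_1]\subseteq R\cup[A,B]$ (using \cref{lem:tree-contain-interval}). Existence and uniqueness are thus handled simultaneously through the recursion.

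Your approach instead decouples the two halves. Existence comes from the metric compactness of $R$, which you obtain cleanly from the corollary after \cref{prop:interval-definable}: each interval is isometric to a closed subset of a compact real interval, and a finite union of compact sets is compact. Uniqueness is then a one-line reduction to the single-interval case via \cref{lem:tree-contain-interval} and \cref{prop:interval-projection}. This is more direct and arguably more transparent---it isolates exactly which ingredients are doing the work (compactness for existence, convexity plus the interval projection for uniqueness)---whereas the paper's inductive argument has the mild advantage of never needing to invoke metric compactness of $R$ explicitly, keeping everything phrased in terms of the already-established nearest-point property for intervals.
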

\begin{proof}
  Proceed by induction on the number of intervals in the finite tree. If the finite tree is a single interval, then by \cref{prop:interval-projection} the result follows.

  Suppose that we know the result for all finite trees that are unions of $n$ intervals. Let $R$ be a finite tree that is the union of $n$ intervals and let $[A,B]$ be some interval with $R \cap [A,B]$ non-empty. Clearly $d(K,R\cup [A,B]) = \min\{d(K,R),\allowbreak d(K,[A,B])\}$. By the induction hypothesis, there is a unique point $L_0 \in R$ with $d(K,L_0) = d( K, R )$ and a unique point $L_1 \in [A,B]$ with $d(K,L_1) = d(K,[A,B])$. If $d(K,L_0) \neq d(K,L_1)$, we are done, so assume that $d(K,L_0)=d(K,L_1)$. Again by the induction hypothesis, there is some $L_2 \in [L_0,L_1]$ such that $d(K,L_2) = d(K,[L_0,L_1])$. If $L_0 \neq L_1$, then we must have, by the uniqueness of $L_2$, that $d(K,L_2) < d(K,L_0) = d(K,L_1)$, but by \cref{lem:tree-contain-interval}, we have that $L_2 \in R \cup [A,B]$. This implies that either $d(K,R) < d(K,L_0)$ or $d(K,[A,B]) < d(K,L_1)$, which is absurd. Therefore there must be some unique $L$ in $R\cup [A,B]$ such that $d(K,L) = d(K,R\cup [A,B])$.

  Therefore, by induction, we have that the assertion is true for all finite trees.
\end{proof}

\begin{prop}
  For any ultrafilter $\Uc$ on index set $I$, if $\{a_i\}_{i \in I}$ and $\{b_i\}_{i \in I}$ are families of elements of $\Ff(X)$ whose corresponding elements $a$ and $b$ in $\Ff(X)^\Uc$ satisfy $d(a,b) < \infty$, then $[a,b]$ in the limit corresponds to the element of $\Pf(X)$ that is the limit along $\Uc$ of the elements in $\Pf(X)$ corresponding to the $[a_i,b_i]$'s.
\end{prop}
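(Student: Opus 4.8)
The plan is to realize the interval $[a,b]$ in the ultrapower as the ultraproduct of the intervals $[a_i,b_i]$, viewed as models of the path theory $T_r$, and then to transfer the question to $\Pf_r(X)$ by invoking the agreement of the ultraproduct topology with the ordinary topology established in \cref{prop:top-is-ultra}.

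First I would fix a length scale. Since $d(a,b) < \infty$, choose $r$ with $d(a,b) < r$; then $d_r(a,b) = \lim_{\Uc} d_r(a_i,b_i) < r$, so the set of $i$ with $d_r(a_i,b_i) < r$, equivalently $d(a_i,b_i) < r$, is $\Uc$-large. After discarding a $\Uc$-small set we may assume $d(a_i,b_i) < r$ for every $i$. Each interval $[a_i,b_i]$, with the basepoint $a_i$ assigned to the constant $c$, is then a model of $T_r$ and so corresponds to an element $p_i \in \Pf_r(X)$; let $p$ be the element of $\Pf_r(X)$ corresponding to $([a,b],a)$. The goal is to show $p = \lim_{\Uc} p_i$.

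The core of the argument is the identification of $\Lc_X(c)$-structures $([a,b],a) \cong \prod_{\Uc} ([a_i,b_i], a_i)$, and this is where uniform definability of intervals does the work. By \cref{prop:interval-definable}, $[a_i,b_i]$ is exactly the zero set of $\delta_{a_i,b_i,r}$, and since the parameters converge along $\Uc$ (so that $a_i \to a$ and $b_i \to b$), the limit of these distance predicates is $\delta_{a,b,r}$, whose zero set is $[a,b]$. Concretely, $[c_i]_\Uc$ lies in $[a,b]$ iff $\lim_{\Uc}\delta_{a_i,b_i,r}(c_i)=0$; given such a representative I would project each $c_i$ to a nearest point of the closed set $[a_i,b_i]$, moving it by $\delta_{a_i,b_i,r}(c_i)\to 0$, to obtain a representative drawn from $\prod_{\Uc}[a_i,b_i]$ that defines the same element of the ultrapower. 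Conversely, any element of $\prod_{\Uc}[a_i,b_i]$ has $\delta_{a,b,r}=0$. This bijection is an isometry preserving every $\Lc_X(c)$-predicate, since the $U_f$, the $d_s$, and the interpretation of $c$ are all continuous and the basepoints cohere to $a$; hence it is an isomorphism of $\Lc_X(c)$-structures.

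Finally I would assemble the pieces. The structure $\prod_{\Uc}([a_i,b_i],a_i)$ is a model of $T_r$, being an ultraproduct of models of the first-order theory $T_r$, and by \cref{prop:top-is-ultra} the element of $\Pf_r(X)$ it corresponds to is exactly the topological limit $\lim_{\Uc} p_i$. Through the isomorphism of the previous paragraph this same element is $p$, so $p = \lim_{\Uc} p_i$, which is the assertion. I expect the main obstacle to lie in that middle step: rigorously justifying that the ultraproduct of the uniformly definable intervals $[a_i,b_i]$ is, as an $\Lc_X(c)$-structure, precisely the definable set $[a,b]$ in the ultrapower — in particular that every element of $[a,b]$ admits a representative lying in $\prod_{\Uc}[a_i,b_i]$, which is exactly where the closedness of the definable intervals and the nearest-point projection (\cref{prop:interval-projection}) are needed.
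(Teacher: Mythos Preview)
Your proposal is correct and follows essentially the same route as the paper. The paper's proof is a single sentence invoking \cref{prop:top-is-ultra}, \cref{prop:interval-definable}, and the standard fact that ultraproducts commute with restriction to definable sets; your middle paragraph is precisely an unpacking of that last fact in this particular instance, and your use of \cref{prop:interval-projection} to produce representatives in $\prod_\Uc[a_i,b_i]$ is a clean way to witness it here.
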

\begin{proof}
  This is immediate from Propositions~\ref{prop:top-is-ultra} and \ref{prop:interval-definable} (as well as the fact that ultraproducts commute with restricting to definable sets).
\end{proof}

Now we come to the first point at which we actually need to assume that the metric $\der$ on $X$ is open.

\begin{lem}[Parallel Paths]\label{lem:par-paths}
($\der$ open.) For any $f \in \Pf(X)$, any entourage $V$, and any $\e > 0$, there is an open neighborhood $O \ni f(0)$ such that for any $x \in O$, there is a path $g \in \Pf(X)$ such that $g(0) = x$ and $(f,g) \in U_{V,\e}$. 
\end{lem}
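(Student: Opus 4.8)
The plan is to construct $g$ as a $1$-Lipschitz function with a \emph{finite} domain that ``shadows'' $f$, and to obtain the neighborhood $O$ by a backward induction along a fine finite sampling of $\dom f$. Allowing $g$ to have a finite domain is what lets us sidestep the fact that $X$ need not be geodesic: a $1$-Lipschitz map out of a finite subset of $\Rb_{\geq 0}$ only has to satisfy the pairwise Lipschitz inequalities, so we never have to interpolate inside $X$, only to choose finitely many points that are pairwise far enough apart in the time coordinate.

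First I would record two preliminaries. Since $\der$ is a metric (hence positive definite) that is lower semi-continuous and $X$ is compact, the closed sets $\{(x,y):\der(x,y)\leq\delta\}$ decrease to the diagonal as $\delta\to 0$; a standard compactness argument then yields, for the given entourage $V$, a symmetric open entourage $W$ with $W^{\circ 2}\subseteq V$ together with a threshold $\delta_0>0$ such that $\der(x,y)\leq\delta_0$ implies $(x,y)\in W$. Next, using compactness of $\dom f$ and the fact that $f$ is $1$-Lipschitz, I would pick a strictly increasing, $\delta$-dense sample $0=r_0<r_1<\dots<r_n=|f|$ of $\dom f$ with $\delta\leq\min\{\delta_0,\e/2\}$, and finally a small $\eta>0$ with $n\eta<\e/2$.

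The heart of the argument is the backward construction of open neighborhoods $O_i\ni f(r_i)$. I set $O_n:=W(f(r_n))$ and, given $O_i$, define $O_{i-1}:=W(f(r_{i-1}))\cap O_i^{<(r_i-r_{i-1})+\eta}$. This is exactly where openness of $\der$ is used: it guarantees that $O_i^{<(r_i-r_{i-1})+\eta}$ is open, so each $O_{i-1}$ is open; and since $f(r_i)\in O_i$ with $\der(f(r_{i-1}),f(r_i))\leq r_i-r_{i-1}<(r_i-r_{i-1})+\eta$, the point $f(r_{i-1})$ genuinely lies in $O_{i-1}$. I then take $O:=O_0$. For $x\in O$, I set $y_0:=x$ and inductively choose $y_i\in O_i$ with $\der(y_{i-1},y_i)<(r_i-r_{i-1})+\eta$, which is possible precisely because $y_{i-1}\in O_{i-1}\subseteq O_i^{<(r_i-r_{i-1})+\eta}$ forces $\der(y_{i-1},O_i)<(r_i-r_{i-1})+\eta$. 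Defining $s_i:=r_i+i\eta$ and $g(s_i):=y_i$ on the finite domain $\{s_0,\dots,s_n\}$, a telescoping of the per-step bounds gives $\der(y_i,y_j)<s_j-s_i$ for $i<j$, so $g\in\Pf(X)$ is $1$-Lipschitz with $g(0)=x$. For the closeness $(f,g)\in U_{V,\e}$: the time shifts satisfy $|s_i-r_i|=i\eta<\e$, the $\delta$-density controls the remaining gaps to within $\e$, and topological closeness holds because for $r$ with nearest sample point $r_i$ we have $(f(r),f(r_i))\in W$ (as $\der(f(r),f(r_i))\leq\delta\leq\delta_0$) and $(f(r_i),y_i)\in W$ (as $y_i\in O_i\subseteq W(f(r_i))$), hence $(f(r),y_i)\in W^{\circ 2}\subseteq V$.

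The main obstacle is the backward neighborhood construction itself: it is openness of the metric that makes the one-step ``shadowing'' sets $O_i^{<t}$ open, and this is what allows a whole neighborhood $O$ of $f(0)$ — rather than merely the single point $f(0)$ — to serve as admissible starting points. The only other delicate bookkeeping is controlling the accumulation of error so that the Lipschitz bound holds exactly while the total time shift stays below $\e$, which the reparametrization $s_i=r_i+i\eta$ together with the finite domain handles cleanly.
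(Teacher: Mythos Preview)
Your proof is correct and follows essentially the same strategy as the paper: pass to a finite $\delta$-dense sample of $\dom f$, run a backward induction along the sample using openness of $\der$ to produce nested open neighborhoods, and then build $g$ on a slightly dilated finite domain so that the per-step distance bounds telescope to the $1$-Lipschitz condition.

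The one noteworthy difference is the reparametrization. The paper dilates multiplicatively, setting $\dom g=\gamma F$ for some $\gamma>1$ close to $1$, and to make this work it also arranges \emph{lower} bounds $\der(x_i,x_{i+1})\geq\gamma^{-1}d(i)$ via auxiliary neighborhoods $B_i,C_{i+1}$ separating $f(r_i)$ from $f(r_{i+1})$. Your additive shift $s_i=r_i+i\eta$ avoids that machinery entirely: since you expand by the time gap $(r_i-r_{i-1})+\eta$ rather than by the spatial gap $\gamma\,\der(f(r_{i-1}),f(r_i))$, the radii in your backward induction are automatically positive and you never need lower distance bounds. This is a genuine simplification; the paper's lower-bound step is not actually used in its Lipschitz verification, and your version sidesteps a degenerate case ($f(r_i)=f(r_{i+1})$) that the paper's formulation has to treat separately.
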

\begin{proof}
  For any entourage $W \subseteq X^2$ and any $z \in X$, let $W(z) = \{w: (w,z) \in W\}$. Obviously $W(z)$ is an open neighborhood of $z$.

  Find an entourage $W \subseteq V$ and a $\delta > 0$ with $\delta<\e$ small enough that for any $x \in X$, $\cl(W(x))^{\leq \delta} \subseteq V(x)$. (This is always possible by compactness.)

  Let $F$ be a finite subset of $\dom f$ such that $0 \in F$ and $d_H(F,\dom f) < \frac{1}{2}\delta$. (Such a set always exists by compactness.) Let $\{r_i\}_{i \leq n}$ be an increasing enumeration of $F$ (with $r_0 = 0$). Fix $\gamma > 1$ small enough that  $(\gamma-1)|f| < \frac{1}{2}\delta$. 
  For each $i <n$, let $d(i) = d(f(r_i),f(r_{i+1}))$.

  Let $A_i = W(f(r_i))$ for each $i \leq n$. For each $i <n$,
  \begin{itemize}
  \item if $f(r_i) = f(r_{i+1})$, let $B_i$ and $C_{i+1}$ both equal $X$ and
  \item if $f(r_i) \neq f(r_{i+1})$, find open neighborhoods $B_i \ni f(r_i)$ and $C_{i+1} \ni f(r_{i+1})$ such that $\cl(B_i)^{\leq \gamma^{-1}d(i)} \cap \cl(C_{i+1}) = \varnothing$. (This is always possible in any compact topometric space.)
  \end{itemize}
  Let $D_0 = A_0 \cap B_0$, $D_i = A_i \cap B_i \cap C_i$ for each $0<i<n$, and $D_n = A_n \cap C_n$. Note that each $D_i$ is an open neighborhood of $f(r_i)$. What we have guaranteed at this point is that for each $i< n$, if $y \in D_i$ and $z \in D_{i+1}$, then $\der(y,z) \geq \gamma^{-1}d(i)$.

  Let $E_n = D_n$. For each $i < n$, let $E_i = D_i \cap E_{i+1}^{< \gamma d(i)}$. Note that each $E_i$ is an open neighborhood of $f(r_i)$ (since $d$ is an open metric). Now note that for each $i<n$, for any $y \in E_i$, there is a $z \in E_{i+1}$ such that $\der(y,z) < \gamma d(i)$. Since $y$ and $z$ are also in $D_i$ and $D_{i+1}$, respectively, we also have that $\der(y,z) \geq \gamma^{-1}d(i)$.

  So let $O = E_0$. For any $x \in O$, by construction, we can find a sequence $\{x_i\}_{i\leq n}$ such that
  \begin{itemize}
  \item $x_0 = x$,
  \item $x_i \in E_i \subseteq W(f(r_i))$ for each $i \leq n$, and
  \item $\gamma^{-1} d(i) \leq \der(x_i,x_{i+1}) \leq \gamma d(i)$ for each $i < n$.
  \end{itemize}
  We are note quite done, as it may be the case that the function that maps $r_i$ to $x_i$ is not $1$-Lipschitz. What we do have is that for each $i < n$, $\gamma^{-1}\der(x_i,x_{i+1}) \leq \der(f(r_i),f(r_{i+1})) \leq |r_{i+1}-r_i|$, so $\der(x_i,x_{i+1}) \leq \gamma |r_{i+1}-r_{i}|$, which implies that the function that maps $\gamma r_i$ to $x_i$ is $1$-Lipschitz.

  Let $g$ be the element of $\Pf(X)$ with domain $\gamma F$ with the property that for each $i \leq n$, $g(\gamma r_i) = x_i$. We want to show that $g \in U_{f,V,\e}$. 

  For each $\gamma r_i$ in $\dom g$, we have by construction that $|\gamma r_i - r_i| \leq (\gamma-1)|f| < \frac{1}{2}\delta < \e$, and furthermore we have that $g(\gamma r_i) \in W(f(r_i)) \subseteq V(f(r_i))$.

  For the other direction, we have that for any $s \in \dom f$, there is an $r_i \in F$ with $|s-r_i| < \frac{1}{2}\delta$. This implies that $|s-\gamma r_i | < \delta < \e$. By construction, we have that $f(r_i) \in W(g(\gamma r_i))$. Since $|s-r_i| < \frac{1}{2}\delta$, we have that $\der(f(s),f(r_i)) < \frac{1}{2}\delta < \delta$ as well, so $f(s) \in (\cl W(g(\gamma r_i)))^{\leq \delta} \subseteq V(g(\gamma r_i))$, as required.

  Therefore $g \in U_{f,V,\e}$. Finally, $|g| \leq \gamma|f| < |f| + \e$.
\end{proof}


\begin{defn}
  For any model $M\models T(X)$ and any $a \in M$, we write $\tp_X(a)$ for the unique $x \in X$ with the property that for every continuous $f : X \to \Rb$, $f(x) = U_f^M(a)$.
\end{defn}

Note that $\tp_X(a)$ is essentially the quantifier-free type of $a$. The notation is mostly to emphasize that we are thinking of it as an element of $X$.

\begin{lem}\label{lem:independent-extensions}
  ($\der$ open.) For any model $M \models T(X)$, any $b \in M$, any $f \in \Pf(X)$ with $f(0) = 0$, and any $\kappa$, there is an elementary extension $N \succeq M$ and a family $\{c_i\}_{i<\kappa}$ of elements of $N$ such that for any $i<\kappa$, $[b,c_i]$ exists and is isomorphic to $f$ and such that for any $i<j<\kappa$, $[b,c_i]\cap[b,c_j] = \{b\}$.
\end{lem}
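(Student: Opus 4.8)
The plan is to obtain $\{c_i\}_{i<\kappa}$ by realizing a suitable partial type over $b$ in an elementary extension $N\succeq M$, with all of the geometric content concentrated in a single $\forall\exists$-consequence of $T(X)$ that is verified inside $\Ff(X)$ via \cref{lem:par-paths}. For the statement to be coherent the interval $[b,x_i]$ based at $b$ must be able to equal a path whose initial value is $\tp_X(b)$, so I read the hypothesis $f(0)=0$ as saying that $f$ starts at $\tp_X(b)$. In variables $\{x_i\}_{i<\kappa}$ let $\Sigma$ be the partial type over $b$ asserting, for each $i$, that $b$ is an initial segment of $x_i$ and that $[b,x_i]$ is isomorphic to $f$ with $b$ as basepoint, and, for each $i<j$, that $d(x_i,x_j)=2|f|$. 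These are closed conditions on the $x_i$ over $b$: the interval is uniformly definable by \cref{prop:interval-definable}, the notion of initial segment is handled by the projections of \cref{prop:tree-proj}, and by \cref{prop:top-is-ultra} the map sending $x_i$ to the element of $\Pf(X)$ represented by $[b,x_i]$ is continuous into a compact Hausdorff $\Pf_r(X)$, in which ``equals $f$'' is a single point. Once $b$ is an initial segment of both $x_i$ and $x_j$ with $d(b,x_i)=d(b,x_j)=|f|$, the meet identity $d(x_i,x_j)=d(b,x_i)+d(b,x_j)-2d(b,x_i\sqcap x_j)$ turns $d(x_i,x_j)=2|f|$ into $x_i\sqcap x_j=b$, i.e.\ $[b,x_i]\cap[b,x_j]=\{b\}$. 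Thus any realization of $\Sigma$ is exactly the family we seek.

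By the compactness theorem it suffices to show $\Sigma$ is finitely approximately satisfiable in $M$. A finite fragment involves finitely many indices $x_1,\dots,x_n$ and a tolerance $\e>0$, and asks for $c_1,\dots,c_n\in M$ with each $[b,c_k]$ within $\e$ of $f$ in $\Pf(X)$ and $d(c_k,c_\ell)>2|f|-\e$ for $k\neq\ell$. I produce these from a $\forall\exists$-consequence of $T(X)=\Th(\Ff(X))$: for each $n$ and $\e$ there is a continuous sentence whose matrix imposes, on every $y$ whose $X$-type lies in a fixed closed neighborhood of $f(0)$, the existence of $x_1,\dots,x_n$ extending $y$ with each $[y,x_k]$ within $\e$ of $f$ and $d(x_k,x_\ell)>2|f|-\e$, while relaxing the requirement for $y$ whose $X$-type is far from $f(0)$. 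Granting that this sentence holds in $\Ff(X)$, it holds in $M$; since $\tp_X(b)=f(0)$ lies in the distinguished neighborhood, applying it to $y=b$ yields the desired $c_1,\dots,c_n\in M$.

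It remains to verify the sentence in $\Ff(X)$, and this is the one place openness of $\der$ is used. Fixing the entourage $V$ and tolerance matching $\e$, \cref{lem:par-paths} supplies an \emph{open} neighborhood $O\ni f(0)$ such that each $x\in O$ starts a path $g\in\Pf(X)$ with $(f,g)\in U_{V,\e}$; openness of the metric is exactly what makes $O$ open, which lets me choose a continuous $[0,1]$-valued function on $X$ equal to $1$ on a neighborhood of $f(0)$ and $0$ off $O$, and it is this function that gates the matrix of the sentence. Given $b'\in\Ff(X)$ with $\tp_X(b')\in O$, pick such a $g$ with $g(0)=\tp_X(b')$ and attach $n$ copies of it to $b'$: take $x_1,\dots,x_n$ extending $b'$ that agree in their $X$- and length-data (namely $b'$ with $g$ appended) but assign pairwise distinct values in $\omega$ to the branching coordinate at $|b'|$. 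By the definition of $\sqcap$ these have pairwise meet exactly $b'$, so the intervals $[b',x_k]$ are pairwise disjoint off $b'$ and each equals $g$, hence lies within $\e$ of $f$; and since $d(x_k,x_\ell)=2|g|$ while $|g|\geq|f|-\e$, shrinking the initial tolerance gives $d(x_k,x_\ell)>2|f|-\e$. This establishes the sentence in $\Ff(X)$, hence finite satisfiability in $M$, and compactness then produces $N\succeq M$ realizing $\Sigma$.

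The step I expect to be the main obstacle is this verification in $\Ff(X)$: because the sentence must quantify over all $y$, including those whose root is far from $f(0)$ where no approximating path can exist, its matrix must impose the parallel-branching requirement only near $\{\tp_X=f(0)\}$ and relax it elsewhere, and turning ``near'' into an open condition expressible by a continuous formula is precisely what openness of $\der$ (through the open set $O$ of \cref{lem:par-paths}) provides. The complementary combinatorial point—that a single attached path can be split, via the $\omega$-labels at $|b'|$, into arbitrarily many copies meeting pairwise exactly at $b'$—is what makes the disjointness clause uniform in the number of indices, so that compactness can assemble all $\kappa$ branches at once.
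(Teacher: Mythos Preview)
Your proof is correct and follows essentially the same approach as the paper: both use \cref{lem:par-paths} to approximate $f$ by paths starting at nearby points of $X$, exploit the $\omega$-coordinate in $\Ff(X)$ to split a single attached path into arbitrarily many branches meeting pairwise only at the basepoint, and then invoke compactness to pass from approximations to an exact realization. The only difference is packaging---the paper transfers from $\Ff(X)$ to $M$ by pulling $b$ back to an element $a\equiv b$ in an ultrapower $\Ff(X)^{\Fc}$ and building the branches there, whereas you make the transfer explicit as a family of $\forall\exists$ sentences gated by a continuous function supported on the open neighborhood $O$ supplied by \cref{lem:par-paths}.
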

\begin{proof}
  Clearly by compactness it is sufficient to show this with $\kappa=\omega$. Fix $M \models T(X)$, $b \in M$, and $f \in \Pf(X)$. Since $b$ is an element of a model of $T(X)=\Th(\Ff(X))$, there exists an ultrafilter $\Fc$ and an $a \in \Ff(X)^\Fc$ such that $a \equiv b$. Let $a$ correspond to the family $\{K_i\}_{i\in I}$, where $I$ is the index set of $\Fc$.

  \cref{lem:par-paths} implies that for each entourage $V$ and $\e > 0$, we can find an open neighborhood $O \ni f(0)$ such that for any $x \in O$, there is a path $g \in \Pf(X)$ such that $g(0) = x$ and $(f,g) \in U_{V,\e}$. This implies that for a $\Fc$-large set of $i$, we can find $L_i^n \in \Ff(X)$ for each $n<\omega$ such that
  \begin{itemize}
  \item $[K_i,L_i^n]$ corresponds to some $g$ in $\Pf(X)$ with $g(0)=(K_i)_X(0)$ and $(f,g) \in U_{V,\e}$ for every $n<\omega$ and
  \item for any $n<k<\omega$, $[K_i,L_i^n]\cap [K_i,L_i^k] = \{K_i\}$. 
  \end{itemize}

  Since we can do this for any entourage $V$ and $\e > 0$, we have, by compactness, that there is an elementary extension $C \succ \Ff(X)$ and a family $\{B^n\}_{n<\omega}$ of elements of $C$ such that
  \begin{itemize}

  \item $[a,B^n]$ corresponds to $f \in \Pf(X)$ for every $n<\omega$ and
  \item for any $n<k<\omega$, $[a,B^n]\cap [a,B^k]=\{K_i\}$.
  \end{itemize}
  Since $b \equiv a$, the required elementary extension of $N$ must exist as well. 
\end{proof}

\begin{lem}\label{lem:type-extension-lemma}
  ($\der$ open.) Let $\Uc$ be the monster model of $T(X)$. Let $\bar{a} = \bar{a}^0\bar{a}^1\dots \bar{a}^{n-1}$ and $\bar{b} = \bar{b}^0\bar{b}^1\dots\bar{b}^{n-1}$ be tuples of elements of $\Uc$ partitioned into finite distance classes. Let $A = \ccl(\bar{a}^0)\cup \ccl(\bar{a}^1)\cup \dots \cup \ccl(\bar{a}^{n-1})$ and $B = \ccl(\bar{b}^0)\cup \ccl(\bar{b}^1)\cup \dots \cup \ccl(\bar{b}^{n-1})$. Assume that there is an $\Lc_X$-isomorphism $f: A \cong B$ such that for each $i,j$, $f(a_i^j)=b_i^j$. Then for any $c \in \Uc$, there exists an $e \in \Uc$ such that
  \begin{itemize}
  \item if $c$ is not in the finite distance class of any element of $\bar{a}$, then $e$ is not in the finite distance class of any element of $\bar{b}$, and the map $g:Ac\to Be$ extending $f$ by letting $g(c) = e$ is an $\Lc_X$-isomorphism and
  \item if $c$ is in the finite distance class of $\bar{a}^i$, then $e$ is in the finite distance class of $\bar{b}^i$, and there is an $\Lc_X$-isomorphism $g:A \cup \ccl(\bar{a}^ic) \cong B\cup \ccl(\bar{b}^ie)$ extending $f$ such that $g(c) = e$.
  \end{itemize}
\end{lem}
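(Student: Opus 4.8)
The plan is to treat $c$ according to its finite distance class and, in the substantive case, to recognize $\ccl(\bar a^i c)$ as the finite tree $\ccl(\bar a^i)$ with a single pendant branch attached, then reproduce that branch on the $\bar b$ side. Throughout write $R_a = \ccl(\bar a^i)$ and $R_b = \ccl(\bar b^i) = f(R_a)$, both finite trees by \cref{cor:convex-closure}. First suppose $c$ lies in no finite distance class of $\bar a$. Then as an $\Lc_X$-structure $Ac$ is just $A$ together with a point at metric distance $\infty$ from every element of $A$, so the only datum carried by $c$ is $\tp_X(c)$. It therefore suffices to produce $e \in \Uc$ in a fresh finite distance class with $\tp_X(e) = \tp_X(c)$; such an $e$ exists by saturation of $\Uc$, since $\Ff(X)$ (hence any $M \models T(X)$) contains a length-$0$ element at every point of $X$ and admits fresh finite distance classes. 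Setting $g = f \cup \{(c,e)\}$ gives the first bullet.

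Now suppose $c$ lies in the finite distance class of $\bar a^i$. If $c \in R_a$ we may take $e = f(c)$, so assume $c \notin R_a$. By \cref{prop:tree-proj} there is a unique $p \in R_a$ with $d(c,p) = d(c,R_a)$, and by \cref{lem:tree-contain-interval} together with the usual tree geometry the geodesic from $c$ to any point of $R_a$ passes through $p$; hence $\ccl(\bar a^i c) = R_a \cup [p,c]$ with $[p,c] \cap R_a = \{p\}$. Let $p' = f(p) \in R_b$; since $f$ preserves every predicate $U_h$ we have $\tp_X(p') = \tp_X(p)$, so the path underlying $[p,c]$ may be regarded as an element of $\Pf(X)$ issuing from $p'$. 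The heart of the proof is to attach at $p'$ a branch isomorphic to $[p,c]$ that meets $R_b$ only at $p'$.

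To do this I would apply \cref{lem:independent-extensions} with basepoint $p'$ and the path determined by $[p,c]$, producing in $\Uc$ (by saturation, absorbing the elementary extension) a family $\{e_j\}_{j \leq N}$ with each $[p',e_j]$ isomorphic to $[p,c]$ and $[p',e_j] \cap [p',e_k] = \{p'\}$ for $j \neq k$. Since $R_b$ is a finite tree, only finitely many germs emanate from $p'$ into $R_b$; a branch $[p',e_j]$ meets $R_b$ beyond $p'$ exactly when it follows one of these germs (if $z \in [p',e_j] \cap R_b$ with $z \neq p'$, then $[p',z] \subseteq [p',e_j] \cap R_b$ by convexity of $R_b$), and two distinct branches cannot follow the same germ without sharing a common initial segment of positive length, violating pairwise disjointness. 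Taking $N$ larger than the number of germs, some $e := e_j$ satisfies $[p',e] \cap R_b = \{p'\}$, whence $\ccl(\bar b^i e) = R_b \cup [p',e]$.

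Finally, define $g$ to agree with $f$ on $A$ and with the fixed isomorphism $[p,c] \cong [p',e]$ (sending $p \mapsto p'$, $c \mapsto e$) on $[p,c]$; the two definitions agree at $p$, so $g$ is a well-defined bijection $A \cup \ccl(\bar a^i c) \to B \cup \ccl(\bar b^i e)$ with $g(c) = e$. That $g$ preserves every $U_h$ is immediate, and preservation of each $d_r$ reduces to the observation that, because each branch meets its tree only at $p$ resp.\ $p'$, distances across the seam decompose as $d(q,y) = d(q,p) + d(p,y)$ for $q \in R_a$ and $y \in [p,c]$ (and dually on the $\bar b$ side), while cross-component distances remain $\infty$; all of these are matched by $f$ together with the branch isomorphism. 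I expect the substantive obstacle to be the branch-attachment step: producing an exact isomorphic copy of $[p,c]$ issuing from $p'$ is precisely where openness of $\der$ enters (through \cref{lem:par-paths} and \cref{lem:independent-extensions}), and routing that copy into a fresh direction so that it avoids the finite tree $R_b$ requires the pairwise-disjointness-plus-pigeonhole argument above; by contrast, the verification that $g$ is an $\Lc_X$-isomorphism is routine tree-distance bookkeeping.
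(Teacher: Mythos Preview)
Your argument tracks the paper's proof closely: the same case split, the same use of \cref{prop:tree-proj} to locate the attaching point $p$ (the paper's $c'$), the same appeal to \cref{lem:independent-extensions} to produce many pairwise-disjoint candidate branches at $p'=f(p)$, and the same verification that gluing along the branch isomorphism yields an $\Lc_X$-isomorphism.

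The only substantive difference is the pigeonhole step. You argue that a finite tree has only finitely many germs at $p'$, so finitely many candidate branches suffice. This is correct, but you have left the key assertion (``only finitely many germs emanate from $p'$ into $R_b$'') unjustified; making it precise---e.g.\ by checking that each of the $m-1$ defining intervals of $R_b$ contributes at most two equivalence classes under the relation $z\sim z'\Leftrightarrow [p',z]\cap[p',z']\neq\{p'\}$---takes a bit more tree bookkeeping than your write-up indicates. The paper sidesteps this entirely by brute force: it invokes \cref{lem:independent-extensions} with $\kappa=(2^{\aleph_0})^+$ and observes that distinct branches meeting $\ccl(\bar b^i)\setminus\{p'\}$ must do so at distinct points (since the branches are pairwise disjoint away from $p'$), while $\lvert\ccl(\bar b^i)\rvert\le 2^{\aleph_0}$; hence some branch avoids $\ccl(\bar b^i)\setminus\{p'\}$ altogether. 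Your route yields a sharper finite bound and keeps the construction inside a small elementary extension; the paper's route avoids any local analysis of branching in $R_b$ and is correspondingly shorter.
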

\begin{proof}
  If $c$ is not in the same finite distance class as any element of $\bar{a}$, then we can easily find $e \in \Uc$ not in the same finite distance class as any element of $\bar{b}$ such that $\tp_X(e) = \tp_X(c)$. Then $g$ extending $f$ to $Ac$ in the obvious way is clearly an $\Lc_X$-isomorphism.

  If $c$ is in the same finite distance class as $\bar{a}^i$, then by \cref{prop:tree-proj}, there is a unique element $c' \in \ccl(\bar{a}^i)$ with $d(c,c') = d(c,\ccl(\bar{a}^i))$. Let $h$ be the element of $\Pf(X)$ corresponding to $[c',c]$. By assumption, we have that $e' \coloneqq f(c') \in B$ has $\tp_X(e')=\tp_X(c')$, so by \cref{lem:independent-extensions}, there is a family $\{e_i\}_{i<(2^{\aleph_0})^+}$ of elements of $\Uc$ such that for each $i<(2^{\aleph_0})^+$, $[e',e_i]$ corresponds to $h$ in $\Pf(X)$ and for each $i<j<(2^{\aleph_0})^+$, $[e',e_i]\cap [e',e_j] = \{e'\}$. Since the cardinality of $\ccl(\bar{b}^i)$ is at most $2^{\aleph_0}$, by the pigeonhole principle, there must be some $i<(2^{\aleph_0})^+$ such that $\ccl(\bar{b}^i)\cap [e',e_i] = \{e'\}$. Let $e$ be that $e_i$.

  We can extend $f$ to $g$ by setting $g(x)$, for each $x \in [c',c]\setminus\{c'\}$, to the unique element $y$ of $[e',e]\setminus \{e'\}$  such that $d(c',x) = d(e',y)$. Since $[c',c]$ and $[e',e]$ both correspond to $h$ in $\Pf(X)$, we have that $g$ is an $\Lc_X$-isomorphism. Finally, we clearly have that $g(c) = e$.
 \end{proof}

 \begin{prop}\label{prop:finite-type-characterization}
   ($\der$ open.) For any finite tuple $\bar{a}$ in any model $M \models T(X)$, $\tp(\bar{a})$ is uniquely determined by the partitioning of $\bar{a}$ into finite distances classes and the $\Lc_X(\bar{b})$-isomorphism type of each $\ccl(\bar{b})$ for $\bar{b}$, a finite distance class of $\bar{a}$.
 \end{prop}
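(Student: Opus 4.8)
The plan is to prove the substantive direction: if $\bar{a}$ (in $M \models T(X)$) and $\bar{a}'$ (in $M' \models T(X)$) have the same partition into finite distance classes and, for each class, an $\Lc_X$-isomorphism between the corresponding convex closures matching up the distinguished elements, then $\tp(\bar{a}) = \tp(\bar{a}')$; the converse is immediate since the data is an isomorphism invariant. First I would pass to a common monster model $\Uc$ of $T(X)$, into which both $M$ and $M'$ embed elementarily, so that $\bar{a}, \bar{a}' \in \Uc$ and it suffices to compare $\varphi^\Uc(\bar{a})$ with $\varphi^\Uc(\bar{a}')$ for every $\Lc_X$-formula $\varphi$.

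Next I would assemble the per-class data into a single global map. Writing $\bar{a} = \bar{a}^0\dots\bar{a}^{n-1}$ and $\bar{a}' = \bar{b}^0\dots\bar{b}^{n-1}$ for the partitions into finite distance classes, set $A = \bigcup_i \ccl(\bar{a}^i)$ and $B = \bigcup_i \ccl(\bar{b}^i)$. The per-class isomorphisms are pairwise compatible because distinct classes lie in distinct finite distance components and are therefore disjoint, so their union is a well-defined bijection $f : A \to B$. I would then check that $f$ is an $\Lc_X$-isomorphism: it preserves each unary predicate $U_h$ since these depend only on a single element's class; it preserves $d_r$ on pairs within one class by hypothesis; and on cross-class pairs both values equal $r$, since such pairs have infinite distance and $f$ respects the partition into finite distance components. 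This gluing step is the only place the combinatorial hypotheses are used, and I expect the cross-class bookkeeping to be the most error-prone (rather than deep) part.

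The core of the argument is then an Ehrenfeucht–Fra\"iss\'e style induction showing $f$ is elementary. I would work with the family $\mathcal{F}$ of all $\Lc_X$-isomorphisms $g : \bigcup_i \ccl(\bar{c}^i) \to \bigcup_i \ccl(\bar{d}^i)$ arising in this way from finite tuples respecting generators and finite distance classes, and prove by induction on formula complexity that any $g \in \mathcal{F}$ preserves the value of every $\Lc_X$-formula. The atomic case is exactly the $\Lc_X$-isomorphism property of $g$ just established, and continuous connectives are immediate. The quantifier case $\sup_y \varphi$ (and dually $\inf_y$) is where \cref{lem:type-extension-lemma} does the work: given any witness $c \in \Uc$, that lemma produces $e \in \Uc$ together with an extension $g' \supseteq g$ lying again in $\mathcal{F}$ with $g'(c) = e$, so the induction hypothesis gives $\varphi(\bar{a}, c) = \varphi(\bar{a}', e)$; ranging over all $c$ yields $\sup_y \varphi(\bar{a}, y) \le \sup_y \varphi(\bar{a}', y)$, and the reverse inequality follows from the symmetric application of \cref{lem:type-extension-lemma}. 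Because each step references only finitely many points, the finite-tuple form of \cref{lem:type-extension-lemma} suffices, and no accumulation of infinitely many points—and hence no global saturation argument—is needed.

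I expect the main obstacle to be organizing the quantifier step so that \cref{lem:type-extension-lemma} applies cleanly: one must confirm that the augmented tuple $\bar{a}c$ is again partitioned into finite distance classes to which the lemma applies (the new point $c$ either joins an existing class $\bar{a}^i$ or forms a new singleton class), and that the extension $g'$ produced by the lemma again respects generators and finite distance classes, so that it genuinely lies in $\mathcal{F}$. These two possibilities are precisely the two bullet points in the conclusion of \cref{lem:type-extension-lemma}, so once $\mathcal{F}$ is set up correctly the verification is routine.
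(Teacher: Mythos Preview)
Your proposal is correct and is essentially a detailed unpacking of the paper's one-line proof, which reads ``This follows from \cref{lem:type-extension-lemma} and a back-and-forth argument.'' Your Ehrenfeucht--Fra\"iss\'e induction on formula complexity, with \cref{lem:type-extension-lemma} supplying the extension step in both directions, is exactly the back-and-forth argument the paper has in mind.
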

 \begin{proof}
   This follows from \cref{lem:type-extension-lemma} and a back-and-forth argument.
 \end{proof}

 \begin{cor}\label{cor:singleton-type-characterization}
   ($\der$ open.) For any $a \in M \models T(X)$, $\tp(a)$ is uniquely determined by $\tp_X(a)$.
 \end{cor}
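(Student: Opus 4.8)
The plan is to specialize Proposition~\ref{prop:finite-type-characterization} to a single element $\bar a = a$. First I would note that a singleton constitutes exactly one finite distance class, namely $\{a\}$ itself (as $d(a,a) = 0 < \infty$), so the partitioning data in Proposition~\ref{prop:finite-type-characterization} is trivial. Consequently the only remaining datum is the $\Lc_X(a)$-isomorphism type of $\ccl(a)$, and the corollary reduces to showing that this isomorphism type is determined by $\tp_X(a)$.

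Next I would identify $\ccl(a)$ explicitly. Applying Corollary~\ref{cor:convex-closure} to the $1$-tuple $\bar a = a$, the chain $[a_0,a_1]\cup\dots\cup[a_{n-2},a_{n-1}]$ is empty when $n = 1$, so $\ccl(a) = \{a\}$. Thus $\ccl(a)$ is a one-point $\Lc_X$-structure carrying the distinguished constant $a$.

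Finally I would check that the $\Lc_X(a)$-isomorphism type of this one-point structure is exactly the information carried by $\tp_X(a)$. On a one-point structure the binary predicates are forced, since $d_r(a,a) = 0$ for every $r > 0$, so the isomorphism type is determined solely by the values $U_f^M(a)$ of the unary predicates as $f$ ranges over continuous functions $X \to \Rb$. By the definition of $\tp_X(a)$ these values equal $f(\tp_X(a))$, and the uniqueness clause in that same definition is precisely the statement that the assignment $x \mapsto (f(x))_f$ is injective; hence the family $(U_f^M(a))_f$ and the point $\tp_X(a)$ carry the same information. Therefore $\tp_X(a)$ determines the $\Lc_X(a)$-isomorphism type of $\ccl(a)$, and by Proposition~\ref{prop:finite-type-characterization} it determines $\tp(a)$. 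I do not anticipate any genuine obstacle here; the only points requiring care are confirming that the convex closure of a singleton degenerates to the singleton and that nothing beyond the unary-predicate values survives in its isomorphism type.
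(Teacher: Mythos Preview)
Your proposal is correct and follows essentially the same approach as the paper: both deduce the corollary directly from Proposition~\ref{prop:finite-type-characterization}. The paper's proof is a terse two-line appeal to that proposition, whereas you have carefully unpacked why the data in the proposition reduces to $\tp_X(a)$ for a singleton (trivial partition, $\ccl(a)=\{a\}$, and the one-point $\Lc_X$-structure determined by the unary predicate values); this extra detail is sound and not a different route.
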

 \begin{proof}
   Clearly we have that if $\tp_X(a) \neq \tp_X(b)$, then $\tp(a) \neq \tp(b)$. Conversely, if $\tp_X(a) = \tp_X(b)$, then by \cref{prop:finite-type-characterization}, we have that $\tp(a) = \tp(b)$.
 \end{proof}

\section{Stability of $T(X)$}
\noindent From now on we will assume that $\der$ is an open metric.

\begin{lem}\label{lem:project-onto-model}
  For any model $M \models T(X)$, any elementary extension $N \succeq M$, and any $a \in N$, either $a$ does not have finite distance to any element of $M$ or there is a unique $e \in M$ with minimal distance to $a$. 
\end{lem}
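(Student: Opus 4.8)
The plan is to pass to the monster model $\Uc$ (with $M \preceq N \preceq \Uc$) and work inside the finite distance component $C$ of $a$. Since $d(a,e)$ is finite exactly when $e$ lies in $C$, the finite-distance hypothesis says $M \cap C \neq \varnothing$, so that $r \coloneqq d(a,M) = \inf\{d(a,e) : e \in M \cap C\}$ is a genuine (finite) infimum over a nonempty set. The whole argument is then the geometry of nearest-point projection in an $\Rb$-tree, the one nontrivial input being that $M$ is convex in the tree sense.

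The key preliminary step, and the one I expect to be the main obstacle, is to show that $M$ is convex: if $e_1,e_2 \in M \cap C$ then $[e_1,e_2] \subseteq M$. I would prove this by showing that every $p \in [e_1,e_2]$ lies in $\mathrm{dcl}(e_1,e_2)$, and hence in $M$, since $M$ is an elementary substructure and is therefore closed under definable closure over its own parameters. Concretely, fix $r' > d(e_1,e_2)$; by \cref{prop:interval-definable} the interval is the zero set of $\delta_{e_1,e_2,r'}$, so any automorphism of $\Uc$ fixing $e_1$ and $e_2$ preserves $[e_1,e_2]$ setwise and preserves $d$. Since (by the corollary following \cref{prop:interval-definable}) $[e_1,e_2]$ is isometric to a closed subset of $[0,d(e_1,e_2)]$ with $e_1,e_2$ as endpoints, the point $p$ is the unique point of the interval at distance $d(e_1,p)$ from $e_1$, hence is fixed by every such automorphism; this gives $p \in \mathrm{dcl}(e_1,e_2)$.

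Granting convexity, existence follows from completeness together with the tree identities already present in \cref{prop:interval-projection}. Choose $e_n \in M \cap C$ with $d(a,e_n) \to r$. For any $n,m$, convexity gives $[e_n,e_m] \subseteq M$, so the nearest point $A_{n,m} \in [e_n,e_m]$ to $a$ (from \cref{prop:interval-projection}) lies in $M$ and hence $d(a,A_{n,m}) \geq r$. The proof of \cref{prop:interval-projection} in fact establishes $d(a,C) = d(a,A_{n,m}) + d(A_{n,m},C)$ for all $C \in [e_n,e_m]$, while the interval being a geodesic gives $d(e_n,e_m) = d(e_n,A_{n,m}) + d(A_{n,m},e_m)$. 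Combining these yields $d(e_n,e_m) = d(a,e_n) + d(a,e_m) - 2\,d(a,A_{n,m}) \leq d(a,e_n) + d(a,e_m) - 2r \to 0$, so $\{e_n\}$ is $d$-Cauchy. As $M$ is metrically complete, $e_n$ converges to some $e \in M \cap C$, and continuity of $d(a,\cdot)$ gives $d(a,e) = r$.

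Uniqueness uses the same two identities. If $e,e' \in M \cap C$ both realize $d(a,e) = d(a,e') = r$, then $[e,e'] \subseteq M$ and its nearest point $A$ to $a$ satisfies $r \leq d(a,A) \leq d(a,e) = r$, so $d(a,A) = r$; additivity then forces $d(A,e) = d(a,e) - d(a,A) = 0$ and likewise $d(A,e') = 0$, whence $e = A = e'$.
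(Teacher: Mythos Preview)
Your proof is correct and complete, but it takes a genuinely different route from the paper's.

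Both arguments hinge on the same preliminary fact, namely that $M$ is convex: intervals $[e_1,e_2]$ with $e_1,e_2\in M$ lie in $M$. The paper states this in passing as ``$[m,g]$ is in the algebraic closure of $mg$''; you make it explicit via the automorphism characterization of $\mathrm{dcl}$, which is fine and arguably clearer.

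The existence arguments diverge. The paper works with the compact set $F=\bigcap\{[a,c]:c\in M,\ d(a,c)<\infty\}$, picks $e\in F$ minimizing $d(e,M)$, and then shows $e\in M$ by reducing (via compactness) to a finite $M_0\subset M$ and applying the finite-tree projection \cref{prop:tree-proj} together with \cref{lem:tree-contain-interval}. Your argument instead takes a minimizing sequence and uses only the additivity formula from \cref{prop:interval-projection} plus convexity to prove the sequence is Cauchy, then invokes metric completeness of $M$. Your route is more elementary in that it bypasses the finite-tree machinery (\cref{lem:tree-contain-interval}, \cref{cor:convex-closure}, \cref{prop:tree-proj}) entirely; the paper's route has the advantage of exhibiting the projection point structurally as an element of $\bigcap_c[a,c]$, which is conceptually useful later.

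For uniqueness, the paper again uses the set $F$ (two distinct points of $F\cap M$ contradict $[e,a]\subseteq F$), while you reuse the additivity identity on $[e,e']$; both are short and essentially equivalent.
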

\begin{proof}
  Consider the set $F\coloneqq \bigcap \{ [a,c]^N : c \in M\text{, }d(a,c) < \infty\}$. Since this is the intersection of a family of compact sets, it itself is compact. So in particular, it contains an element $e$ such that $d(e, M)$ is minimized.

  \emph{Claim.} $d(e,M) = 0$, or, in other words, $e \in M$.

  \emph{Proof of claim.} Suppose that $d(e,M) > 0$. Since $x \mapsto d(x,M)$ is a continuous function, by compactness, there must be some finite set $M_0 \subset M$ of elements with finite distance to $a$ such that $F_0 \coloneqq \inf \{ f \in \bigcap \{ [a,c]^N: c \in M_0 \}\} > 0$. By \cref{prop:tree-proj}, there is a unique element $g \in \ccl(M_0) \subset M$ of minimal distance to $a$. It must be the case that $g \notin F_0$, so there must be some $m \in M_0$ such that $g \notin [m,a]$. Let $h$ be the unique element of $[m,a]$ of minimal distance to $g$.

  It is easy to check that in $\Ff(X)$, for any $A,B,C$, the unique element of $[A,B]$ closest to $C$ is contained in $[A,C]$, which implies that this is true for all models of $T(X)$. Therefore we have that $h \in [m,g]$. Since $[m,g]$ is in the algebraic closure of $mg$, we have that $[m,g]\subset M$. By \cref{lem:tree-contain-interval}, we have that $[m,g] \subseteq \ccl(M_0)$, and so $h \in \ccl(M_0)$, but $d(h,a) < d(g,a)$, which is a contradiction. \hfill $\square_{\text{claim}}$

  \emph{Claim.} $e$ is unique.

  \emph{Proof of claim.} Suppose that there are distinct $e$ and $e'$ in $F \cap M$. This implies that for any $m \in M$ with $d(m,a) < \infty$, $e$ and $e'$ are both in $[m,a]$. One of $e$ and $e'$ must be closer to $a$. Assume without loss of generality that $d(e,a) < d(e',a)$. Then we have that $e' \notin [e,a] \subseteq F$, which is a contradiction. \hfill $\square_{\text{claim}}$

  The argument for the last claim also establishes that for any $m \in M$, $d(m,a) \geq d(e,a)$.
\end{proof}

\begin{lem}\label{lem:big-distance}
  Fix $a,b,c,e \in M \models T(X)$ with pairwise finite distance. Let $c'$ be the unique closest point on $[a,b]$ to $c$ and $e'$ the unique closest point on $[a,b]$ to $e$. If $c' \neq e'$, then $d(c,e) = d(c,c') + d(c',e') + d(e',e)$.
\end{lem}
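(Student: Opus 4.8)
The plan is to reduce the identity to the additivity of distance along a ``straight'' concatenation of intervals; the whole content lies in the inequality $d(c,e)\ge d(c,c')+d(c',e')+d(e',e)$, since the reverse is just the triangle inequality. The basic tool is the \emph{gate} property of projections: the proof of \cref{prop:interval-projection} does not merely produce the nearest point $c'$ of $c$ on $[a,b]$, it establishes that $d(c,x)=d(c,c')+d(c',x)$ for every $x\in[a,b]$, and symmetrically $d(e,x)=d(e,e')+d(e',x)$. Taking $x=e'$ and $x=c'$ respectively yields the two auxiliary identities $d(c,e')=d(c,c')+d(c',e')$ and $d(e,c')=d(e,e')+d(c',e')$. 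Consequently, once I know that $c'$ lies on the geodesic from $c$ to $e$, that is, $d(c,e)=d(c,c')+d(c',e)$, the desired formula follows by substituting $d(c',e)=d(e,c')=d(e,e')+d(c',e')$.

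First I would propagate the gate identity up the branches. For $y\in[c,c']$ one has the interval additivity $d(c,y)+d(y,c')=d(c,c')$, and feeding this into the gate identity for $c$ (together with the triangle inequality both ways) gives $d(y,x)=d(y,c')+d(c',x)$ for all $x\in[a,b]$; in particular every point of the branch $[c,c']$ has $c'$ as its unique nearest point on $[a,b]$, and symmetrically every point of $[e,e']$ has $e'$ as its nearest point on $[a,b]$. Combined with the uniqueness clause of \cref{prop:interval-projection}, this makes the two branches internally disjoint from the trunk and from each other: a common point of $[c,c']$ and $[a,b]$ projects onto $[a,b]$ both to itself and to $c'$, hence equals $c'$, while a common point of $[c,c']$ and $[e,e']$ would project onto $[a,b]$ simultaneously as $c'$ and as $e'$, contradicting $c'\ne e'$. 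Thus the finite tree $T\coloneqq[c,c']\cup[c',e']\cup[e',e]$, where $[c',e']\subseteq[a,b]$ by \cref{lem:tree-contain-interval}, is an internally disjoint linear concatenation running $c\to c'\to e'\to e$, and $[c,e]\subseteq T$ by \cref{lem:tree-contain-interval}.

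The main obstacle is the last, genuinely tree-theoretic step: converting internal disjointness into additivity of distance, i.e. showing $c'\in[c,e]$. Pure triangle inequalities do not suffice here, since they hold in any metric space and do not force additivity; what is needed is a separation (cut-point) argument, for which I would combine the median-type containment $[c,e]\subseteq[c,c']\cup[c',e]$ from (the proof of) \cref{lem:tree-contain-interval} with the observation that $[c,c']$ and $[c',e]=[c',e']\cup[e',e]$ meet only at $c'$, so that $c'$ separates $c$ from $e$ inside $[c,c']\cup[c',e]$; the uniqueness of nearest points in \cref{prop:tree-proj} can then be used to rule out the geodesic $[c,e]$ bypassing $c'$. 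This is exactly where the hypothesis $c'\ne e'$ enters, as it is what makes the two branches truly separated. Because isolating this separation cleanly is the delicate point, the safe fallback is to verify the identity directly in $\Ff(X)$, where $d(K,K')=|K|+|K'|-2|K\sqcap K'|$ and the projections are read off from the meet-lengths as in the case analysis of \cref{prop:interval-projection}; the claim then reduces to a finite case check on where $c$ and $e$ attach to $[a,b]$, and transfers to every model of $T(X)$ since, for tuples of pairwise distance $<r$, the nearest-point projection is a uniformly definable partial function and the identity is a closed condition.
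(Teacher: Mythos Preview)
Your ``safe fallback'' at the end---verify the identity in $\Ff(X)$ by the explicit meet-length formula and then transfer, noting that for pairwise distances bounded by $r$ and $d(c',e')>\e$ the statement is first-order---is exactly the paper's proof, and is correct.

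The bulk of your proposal attempts something the paper does not: an intrinsic argument in an arbitrary model using only the gate identity from \cref{prop:interval-projection} and the containment lemmas. The gate identities you extract are right, and so are the disjointness claims about the branches $[c,c']$, $[c',e']$, $[e',e]$. But you correctly flag that the remaining step---showing $c'\in[c,e]$, equivalently $d(c,e)=d(c,c')+d(c',e)$---is the genuine content, and your sketch of how to close it (cut-point reasoning inside $[c,c']\cup[c',e]$ via \cref{prop:tree-proj}) is not quite a proof: intervals here are only closed subsets of real segments, not connected paths, so ``$c'$ separates $c$ from $e$ in the union'' does not by itself force $c'$ to lie on $[c,e]$ or force distance additivity without further input. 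Since you explicitly defer to the $\Ff(X)$ computation at that point, the proposal is ultimately sound and coincides with the paper's approach; the intrinsic route would need an additional lemma (e.g.\ that projection onto a finite tree is distance-additive through the gate) proved, again, by reduction to $\Ff(X)$.
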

\begin{proof}
  It is easy to verify that this is true in $\Ff(X)$. For any $r, \e > 0$, the statement
  \begin{itemize}
  \item[] for any $a,b,c,e$ with pairwise distance $<r$, if $c'$ is the closest point on $[a,b]$ to $c$ and $e'$ to $e$ and $d(c',e') > \e$, then $d(c,e) = d(c,c') + d(c',e') + d(e',e)$ 
  \end{itemize}
  is first-order. Therefore these hold in any model of $T(X)$, which is precisely the desired conclusion.
\end{proof}

\begin{lem}\label{lem:portly-models}
  For any $\kappa$, there is a model $M$ of $T(X)$ with density character $\kappa$ such that $|M| = \kappa^\omega$.
\end{lem}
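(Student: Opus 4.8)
The plan is to produce $M$ as a $\kappa$-branching tree of height $\omega$ whose branch lengths accumulate to a finite value, realized inside the monster model $\Uc$. The point is that such a tree has a finitely-supported \emph{skeleton} of size $\kappa$ which is metrically dense, while the $\kappa^\omega$ pairwise-distinct branch limits that any complete model containing the skeleton must absorb will witness $|M| = \kappa^\omega$. The crucial tool is \cref{lem:independent-extensions}: it is exactly what allows $\kappa$-many pairwise-independent branches to sprout from a single node. (Note that $\Ff(X)$ itself only branches $\omega$-many ways, via the label $K_\omega$, so passing to $\Uc$ is genuinely necessary here.)

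Concretely, I would set $\ell_n = 1 - 2^{-n}$ and let $f_n \in \Pf(X)$ be the constant path of length $2^{-(n+1)}$ (constant paths are $1$-Lipschitz, so no variation in $X$ is needed). Working in a sufficiently saturated $\Uc$, I build $\{K_s : s \in \kappa^{<\omega}\}$ by recursion on $|s|$: fix a basepoint $K_{\langle\rangle}$ of length $0$, and given $K_s$ with $|s| = n$, apply \cref{lem:independent-extensions} to the point $K_s$ and the path $f_n$ to obtain successors $\{K_{s^\frown i}\}_{i<\kappa}$ with $[K_s,K_{s^\frown i}] \cong f_n$ and $[K_s,K_{s^\frown i}] \cap [K_s,K_{s^\frown j}] = \{K_s\}$ for $i \neq j$. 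Then $K_s \sqsubseteq K_{s^\frown i}$, $|K_{s^\frown i}| = \ell_{n+1}$, and $K_{s^\frown i} \sqcap K_{s^\frown j} = K_s$. In particular the $\kappa$ immediate successors of the root are pairwise at distance $\ell_1 + \ell_1 = 1$, so the skeleton $S = \{K_s : s \in \kappa^{<\omega}\}$ contains a $1$-separated set of size $\kappa$ and has size $\kappa^{<\omega} = \kappa$.

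Next I would let $M \preceq \Uc$ be an elementary substructure containing $S$ of least density character, via downward L\"owenheim--Skolem. Assuming $\kappa$ is at least the density character of $\Lc_X$ (the only case of interest, since $\Lc_X$ may itself be large), this gives density character at most $\kappa$, while the $1$-separated subset of $S$ forces it to be exactly $\kappa$. Since $M$ is a model it is metrically complete, so for each branch $b \in \kappa^\omega$ the sequence $\{K_{b \res n}\}_{n<\omega}$, which is Cauchy because $d(K_{b\res n},K_{b\res m}) = |\ell_m - \ell_n| \to 0$, has a limit $K_b \in M$; and if $b \neq b'$ split at level $n$ then $K_b \sqcap K_{b'} = K_{b\res n}$, so $d(K_b,K_{b'}) = 2(1-\ell_n) > 0$ and these limits are pairwise distinct. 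Hence $|M| \geq \kappa^\omega$. Conversely any complete metric space has cardinality at most its density character raised to $\aleph_0$, so $|M| \leq \kappa^\omega$, giving $|M| = \kappa^\omega$.

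The only real obstacle is reconciling the two demands, that $M$ be small enough to have density character $\kappa$ yet large enough to have cardinality $\kappa^\omega$; this is precisely what the accumulation $\ell_n \to 1$ buys, since it makes the $\kappa$-sized skeleton dense in the whole tree (each branch limit lies at distance $\to 0$ from it), so completion injects the extra $\kappa^\omega$ points without raising the density character. The auxiliary point deserving care is verifying that \cref{lem:independent-extensions} really delivers $\kappa$-many pairwise-independent branches from each node with the claimed meet and length behavior, which is the reason the construction must take place in $\Uc$ rather than in $\Ff(X)$.
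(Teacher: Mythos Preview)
Your proposal is correct and carries out in detail the same underlying construction that the paper invokes by citing Theorem~8.10 of \cite{Carlisle2020}: a $\kappa$-branching tree of height $\omega$ whose edge lengths sum to a finite value, so that the size-$\kappa$ skeleton is dense in the set of $\kappa^\omega$ branch limits. The only difference is packaging---the paper observes that any $\Rb$-tree embeds isometrically into a model of $T(X)$ and then defers to the analogous result for $\Rb$-trees, whereas you build the tree directly inside $\Uc$ via \cref{lem:independent-extensions}; this makes your argument self-contained but does not change its substance.
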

\begin{proof}
  This follows the argument in the proof of Theorem 8.10 in \cite{Carlisle2020}, noting that any given $\Rb$-tree embeds isometrically into a model of $T(X)$.
\end{proof}

\begin{prop}\label{prop:theory-is-stable}
  For any model $M\models T(X)$, the elements of $S_1(M)$ are precisely
  \begin{itemize}
  \item the realized types in $M$,
  \item types $p_{m,f}$ for each pair $m \in M$ and $f \in \Pf(X)$ with $f(0) = \tp_X(m)$, and
  \item types $q_x$ for each $x \in X$,
  \end{itemize}
  where
  \begin{itemize}
  \item $p_{m,f}$ is the type of an element $a \in N \succ M$ whose unique nearest element in $M$ is $m$ and which satisfies that $[m,a]$ corresponds to $f$ in $\Pf(X)$ and
  \item $q_x$ is the type of an element $b \in N \succ M$ with $\tp_X(b) = x$ and $d(b,M) = \infty$.
  \end{itemize}
  Furthermore, the metric\footnote{Recall that the `official' metric in $\Lc_X$ is $d_{\diam X}$.} on $S_1(M)$ is given by
  \begin{itemize}
  \item $d(p_{m,f},p_{m',f'}) = \min\{|f|+d(m,m')+|f'|,\diam X\}$ if $m\neq m'$,
  \item $d(p_{m,f},p_{m,f'}) = \min\{|f\sqcap f'|, \diam X\}$, where $f\sqcap f'$ is the longest common initial segment of $f$ and $f'$,
  \item $d(q_x,q_{x'}) = \der(x,x')$, and
  \item $d(m,q_x)=d(p_{m,f},q_x) = \diam X$
  \end{itemize}
  for any $m,m' \in M$, $x,x' \in X$, and $f,f' \in \Pf(X)$. So in particular,
  \[
    |M| + \dc X \leq \dc S_1(M) \leq |\Pf(X)|\cdot|M| + \dc X,
  \]
   and $T(X)$ is strictly stable.
\end{prop}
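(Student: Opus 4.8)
The plan is to determine the underlying set of $S_1(M)$ first, then read the metric off the tree geometry, and finally extract the cardinal bounds and the stability dichotomy. For the classification of types, I would begin with an arbitrary $a$ realizing some $p \in S_1(M)$ in an elementary extension $N \succeq M$ and apply \cref{lem:project-onto-model}: either $a$ has infinite distance to all of $M$, or it has a unique nearest point $m \in M$. In the second case $[m,a]$ is an interval with foot $m$ and hence corresponds to a unique $f \in \Pf(X)$ with $f(0) = \tp_X(m)$; this gives $p = p_{m,f}$, the realized types being the degenerate case $|f| = 0$. In the first case I set $x = \tp_X(a)$ and obtain $p = q_x$. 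This shows every type has one of the three listed forms. For existence, \cref{lem:independent-extensions} produces an element hanging off a prescribed $m$ along a prescribed path $f$, and its independence clause combined with a pigeonhole against $M$ (as in the proof of \cref{lem:type-extension-lemma}) guarantees the nearest point is genuinely $m$; the types $q_x$ are realized by the partial type asserting $\tp_X = x$ and $d(\,\cdot\,, m) = \infty$ for all $m \in M$, which is consistent by compactness.

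The crux of the argument is \emph{uniqueness}: that the datum $(m,f)$, respectively $x$, determines $p$. Since a type over $M$ is determined by the types $\tp(a\bar m)$ over each finite tuple $\bar m$ from $M$, I would reduce to \cref{prop:finite-type-characterization}, which says $\tp(a\bar m)$ depends only on the partition into finite-distance classes and the $\Lc_X$-isomorphism type of each $\ccl$. When $d(a,M) = \infty$ the element $a$ forms a singleton class with $\ccl(a) = \{a\}$ of isomorphism type exactly $\tp_X(a) = x$, so $\tp(a\bar m)$ is fixed once $x$ and $\tp(\bar m)$ are; this yields uniqueness of $q_x$. In the finite-distance case the main obstacle is to control $\ccl(a\bar m)$ inside $a$'s class: here one uses that $M$ is convex (as $[m_1,m_2] = \ccl(m_1m_2) \subseteq M$), so by \cref{prop:tree-proj} the nearest point $m^{\ast}$ of $a$ in $\ccl(\bar m)$ is the nearest point of $m$ in $\ccl(\bar m)$, and $\ccl(a\bar m) = \ccl(\bar m) \cup [m^{\ast},a]$ with $[m^{\ast},a] = [m^{\ast},m]\cup[m,a]$ meeting $\ccl(\bar m)$ only at $m^{\ast}$. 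Thus the isomorphism type of $\ccl(a\bar m)$ is pinned down by $\tp(\bar m)$, $m$, and $f$, all of which the datum records; a back-and-forth reading of \cref{lem:type-extension-lemma} then gives uniqueness of $p_{m,f}$.

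With the set described, I would compute the four metric cases by tree geodesics. Since realizations in distinct finite-distance components are at distance $\infty$, every pairing of a $q_x$ with a realized type or a $p_{m,f}$ gives official distance $\diam X$, and $d(q_x,q_{x'}) = \der(x,x')$: the lower bound is $1$-Lipschitzness of $\tp_X$ (shown in \cref{prop:unif-cont}), and the upper bound is witnessed by two points sharing a foot and joined by the two-point path from $x$ to $x'$ of length $\der(x,x')$. For $p_{m,f}$ and $p_{m',f'}$ I would use that each realization projects to its own foot, so by \cref{lem:big-distance} the geodesic decomposes as $|f| + d(m,m') + |f'|$ when $m \neq m'$; for a common foot $m$ the realizations may be made to agree along the common initial segment of $f$ and $f'$ and must then diverge, giving the stated $\sqcap$-formula (capped at $\diam X$).

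Finally, for the bounds and stability: for fixed $f$ with $|f| > 0$, the types $\{p_{m,f} : m \in M\}$ are pairwise at distance $\geq 2|f| > 0$, so a dense subset of $S_1(M)$ must have cardinality at least $|M|$ (note: full cardinality, not merely density character), and the isometric copy of $(X,\der)$ given by the $q_x$ sits at maximal distance, yielding $\dc S_1(M) \geq |M| + \dc X$; the upper bound $\dc S_1(M) \leq |\Pf(X)|\cdot|M| + \dc X$ follows by taking a dense set of feet in $M$, all of $\Pf(X)$, and a $\dc X$-dense set of $q_x$'s. Stability then follows from $|M| \leq (\dc M)^{\aleph_0}$: for $\lambda = \lambda^{\aleph_0} \geq |\Pf(X)| + \dc X$ one gets $\lambda$-stability, and such $\lambda$ are unbounded. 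Non-superstability uses the portly models of \cref{lem:portly-models}: for $\kappa$ with $\kappa^{\aleph_0} > \kappa$ (cofinally many) there is $M$ with $\dc M = \kappa$ but $|M| = \kappa^{\aleph_0}$, whence $\dc S_1(M) \geq |M| > \kappa$, defeating $\kappa$-stability. The one delicate point to get right is the uniqueness step of the second paragraph, since everything else is either cardinal arithmetic or a direct geodesic computation.
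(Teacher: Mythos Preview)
Your argument is correct and follows the same route as the paper's own proof: trichotomize via \cref{lem:project-onto-model}, pin down types with \cref{prop:finite-type-characterization} and \cref{cor:singleton-type-characterization}, compute the $m\neq m'$ distance with \cref{lem:big-distance}, and extract strict stability from the density-character bounds together with \cref{lem:portly-models}. Two small slips worth tightening: a \emph{dense} set of feet does not suffice for the upper bound (distinct feet force distance at least $|f|+|f'|$ regardless of $d(m,m')$), so you really do need all of $M$ as the stated bound already reflects; and in the lower bound you cannot literally fix a single $f$ across all $m$ since $f(0)=\tp_X(m)$ varies, but choosing for each $m$ the constant path of a fixed positive length gives the same pairwise separation.
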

\begin{proof}
  Clearly every type $p(x)$ in $S_1(M)$ is either realized, satisfies $d(x,m) < \infty$ for some $m \in M$, or satisfies $d(x,m) = \infty$ for every $m \in M$. The characterizations of these types as $p_{m,f}$ and $q_x$ for various $m$,$f$, and $x$ follows from \cref{lem:project-onto-model}, \cref{prop:finite-type-characterization}, and \cref{cor:singleton-type-characterization}.

  For the metric on $S_1(M)$, the last three bullet points are clearly correct. The first bullet point is clearly an upper bound, so we just need to show that a smaller distance cannot be achieved. If $a,b \in N \succ M$ have nearest points $c,e \in M$, respectively, then these are also their nearest points on $[c,e] \subset M$, so by \cref{lem:big-distance}, $d(a,b) = d(a,c) + d(c,e) + d(e,b)$, as required.

  The bounds on the density character of $S_1(M)$ are obvious, so the fact that $T(X)$ is strictly stable follows from \cref{lem:portly-models}.
\end{proof}

\section{Main Theorem}

\begin{lem}\label{lem:fin-diam}
  Any compact topometric space $(X,\tau,\der)$ with open metric has finite diameter.
\end{lem}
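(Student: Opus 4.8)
The plan is to bootstrap from a single open set of bounded radius, then use the open metric property to expand it across all of $X$ and close the argument with compactness. Fix a point $x_0 \in X$; it suffices to bound $\sup_{x \in X}\der(x_0,x)$, since the triangle inequality then bounds $\diam X$ by twice this quantity.

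First I would record some preliminary facts. Because $\der$ is lower semi-continuous, each set $C_n \coloneqq \{x : \der(x_0,x) \le n\}$ is $\tau$-closed, and since $\der$ is a genuine (finite-valued) metric we have $X = \bigcup_{n<\omega} C_n$. Moreover, intersecting the closed sets $\{(x,y) : \der(x,y) \le 1/n\}$ over $n<\omega$ yields exactly the diagonal of $X^2$, which is therefore closed; hence $(X,\tau)$ is Hausdorff, and being compact Hausdorff it is a Baire space.

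Now Baire's theorem gives some $n$ for which $C_n$ has nonempty $\tau$-interior; pick a nonempty open $U \subseteq C_n$, so that $\der(x_0,u) \le n$ for every $u \in U$. This is where openness of the metric enters: for each $k<\omega$ the set $U^{<k}$ is open, the sets $U^{<k}$ increase with $k$, and $\bigcup_{k<\omega} U^{<k} = X$ because every $x \in X$ has finite distance to the nonempty set $U$. Thus $\{U^{<k}\}_{k<\omega}$ is an increasing open cover of the compact space $X$, so $X = U^{<K}$ for a single $K$. For any $x \in X$ we then get some $u \in U$ with $\der(x,u) < K$, whence $\der(x_0,x) \le \der(x_0,u) + \der(u,x) < n+K$, giving $\diam X \le 2(n+K) < \infty$.

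I expect the only genuine subtlety to be the first move—producing a nonempty open set of bounded radius—which is precisely what the Baire category argument supplies; everything afterward is a routine propagation via the open-metric property together with compactness. The one point to verify carefully is that $\der$ is finite-valued (so that the $C_n$ exhaust $X$ and the $U^{<k}$ cover $X$), which holds because $\der$ is a metric rather than an extended metric in the sense later used for $\Ff(X)$.
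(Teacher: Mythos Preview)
Your proof is correct, but the route differs from the paper's. Both arguments share the final propagation step: once one has a nonempty open set $U$ of finite diameter (or, equivalently, bounded distance from a fixed point), the open-metric hypothesis makes each $U^{<k}$ open, these cover $X$, and compactness finishes. The divergence is in how that initial open set is obtained. You invoke the Baire category theorem, using lower semi-continuity to show $X$ is Hausdorff and writing $X$ as a countable union of the closed balls $C_n$. The paper instead argues by contradiction: assuming every nonempty open set has infinite diameter, it builds nested decreasing open sets $U_i \supseteq \cl U_{i+1}$ and $V_i \supseteq \cl V_{i+1}$ with $U_{i+1}^{<i}\cap V_{i+1}=\varnothing$ (using lower semi-continuity at each stage), then extracts limit points $x_\omega,y_\omega$ from the intersections by compactness, which are forced to have $\der(x_\omega,y_\omega)=\infty$, contradicting that $\der$ is a genuine metric. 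Your argument is shorter and more conceptual; the paper's is more hands-on and avoids citing Baire, at the cost of an explicit nested construction.
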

\begin{proof}
  If some non-empty open subset $U \subseteq X$ has finite diameter, then by compactness there is some finite $\e > 0$ such that $X = U^{<\e}$, so $X$ has finite diameter.

  So assume that every non-empty open subset of $X$ has infinite diameter. Fix $x_1,y_1 \in X$ with $\der(x_1,y_1) > 1$. By lower semi-continuity, there are open neighborhoods $U_1 \ni x_1$ and $V_1 \ni y_1$ such that $U_1^{<1}$ and $V_1$ are disjoint.

  At stage $i$, given non-empty open sets $U_i$ and $V_i$, since $U_i$ and $V_i$ both have infinite diameter, we can find $x_{i+1} \in U_i$ and $y_{i+1} \in V_i$ with $\der(x_{i+1},y_{i+1}) > i+1$. We can find open neighborhoods $U_{i+1} \ni x_{i+1}$ and $V_{i+1}\ni y_{i+1}$ such that $\cl U_{i+1} \subseteq U_i$ and $\cl V_{i+1} \subseteq V_i$ and $U_{i+1}^{<i} \cap V_{i+1} = \varnothing$.

  Let $x_\omega$ be an element of $\bigcap_{i<\omega}\cl U_i$ and $y_\omega$ of $\bigcap_{i<\omega}\cl V_i$, which are both non-empty by compactness. By construction we have that $\der(x_\omega,y_\omega) > i$ for every $i < \omega$, but this contradicts that $\der$ is a metric (rather than an extended metric).
\end{proof}

\begin{thm}\label{thm:main-thm}
  For any compact topometric space $(X,\tau,\der)$ with an open metric, there is a stable continuous first-order theory $T$ such that $S_1(T)$ is isomorphic to $(X,\tau,\der)$.
\end{thm}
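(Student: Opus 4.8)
The plan is to show that $(X,\tau,\der)$ is topometrically isomorphic to $S_1(T(X))$, where $T(X)=\Th(\Ff(X))$. First I would dispose of the degenerate case: if $\diam X = 0$ then, since $\der$ is a genuine metric, $X$ is a single point and the claim is trivial, so assume $0 < \diam X < \infty$, the finiteness coming from \cref{lem:fin-diam}. Then $\Ff(X)$, $\Lc_X$, and $T(X)$ are defined, and $T(X)$ is stable by \cref{prop:theory-is-stable}. It remains to produce the isomorphism. Note first that $X$ is Hausdorff: given $x \neq y$, lower semi-continuity makes $\{(u,v) : \der(u,v) \leq \frac{1}{2}\der(x,y)\}$ closed, and its open complement separates $(x,y)$ from the diagonal, hence $x$ from $y$.

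For the underlying bijection, for each $x \in X$ let $p_x \in S_1(T(X))$ denote the common type over $\varnothing$ of all elements $a$ (in models of $T(X)$) with $\tp_X(a) = x$; this is well defined by \cref{cor:singleton-type-characterization}. Conversely, for $p \in S_1(T(X))$, any realization $a$ satisfies $U_f^p = f(\tp_X(a))$ for every continuous $f$, so the family $(U_f^p)_f$ is evaluation at the point $\tp_X(a) \in X$; since continuous functions separate points of the compact Hausdorff space $X$, this determines a single point $\tp_X(p) \coloneqq \tp_X(a)$, and by \cref{cor:singleton-type-characterization} we get $p = p_{\tp_X(p)}$. Thus $x \mapsto p_x$ is a bijection $X \to S_1(T(X))$ with inverse $p \mapsto \tp_X(p)$, distinct points of $X$ being separated by the value of some $U_f$. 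This bijection is a homeomorphism: the inverse $\tp_X \colon S_1(T(X)) \to X$ is continuous because for a subbasic open $f^{-1}(O) \subseteq X$ (with $f$ continuous and $O \subseteq \Rb$ open — such sets form a subbasis of $\tau$ since $X$ is compact Hausdorff) its preimage is $\{p : U_f^p \in O\}$, which is open in the logic topology; as $S_1(T(X))$ is compact and $X$ is Hausdorff, a continuous bijection between them is a homeomorphism.

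The remaining and, I expect, most delicate point is the isometry; write $d$ for the type-space metric induced by the official metric $d_{\diam X}$. For the upper bound I would exhibit explicit realizations in $\Ff(X)$: let $K$ have $\pi(K) = \{0,\der(x,y)\}$ with $K_X(0) = x$ and $K_X(\der(x,y)) = y$. This $K_X$ is $1$-Lipschitz precisely because $\der(x,y) \leq \der(x,y)$, and it is essential here that the domains of paths may be disconnected; so $\tp_X(K) = y$ gives $K \models p_y$, while its initial segment $K\res 0 \models p_x$, and since $K\res 0 \sqsubseteq K$ we have $d(K\res 0, K) = \der(x,y)$, whence $d(p_x,p_y) \leq \der(x,y)$. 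For the lower bound, take any realizations $a \models p_x$ and $b \models p_y$ in the monster model. If they lie in different finite distance components, then $d(a,b) = \diam X \geq \der(x,y)$. Otherwise the interval $[a,b]$ corresponds (via \cref{prop:interval-definable} and its corollary) to an element $g \in \Pf(X)$ that is a $1$-Lipschitz path from $\tp_X(a) = x$ to $\tp_X(b) = y$ of length $|g| = d(a,b)$, so that $\der(x,y) = \der(g(0),g(|g|)) \leq |g| = d(a,b)$. Taking the infimum over all realizations yields $d(p_x,p_y) \geq \der(x,y)$, and hence equality.

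Putting these together, $x \mapsto p_x$ is a topometric isomorphism from $(X,\tau,\der)$ onto $S_1(T(X))$ for the stable theory $T = T(X)$, which is exactly the desired conclusion. The main obstacle is the isometry: the upper bound hinges on the (easy to overlook) observation that disconnected path domains realize the exact distance $\der(x,y)$ by a two-point path, while the lower bound requires identifying the interval $[a,b]$ in an arbitrary model with a genuine $1$-Lipschitz path in $\Pf(X)$ so that its $1$-Lipschitz constraint forces $d(a,b) \geq \der(x,y)$.
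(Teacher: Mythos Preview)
Your proof is correct and follows the paper's approach almost exactly: both use $T(X)$, invoke \cref{cor:singleton-type-characterization} for the bijection, push a continuous bijection from compact to Hausdorff into a homeomorphism, exhibit explicit realizations in $\Ff(X)$ at distance exactly $\der(x,y)$ for the upper bound, and cite \cref{prop:theory-is-stable} for stability. The one place you differ slightly is the lower bound on the metric: the paper simply asserts that the map $S_1(T)\to X$ is $1$-Lipschitz (implicitly via the built-in moduli $\alpha_{U_f}$ for Lipschitz $f$ and \cite[Thm.~1.6]{BYTopo2010}), whereas you give a self-contained argument by identifying $[a,b]$ with a $1$-Lipschitz path in $\Pf(X)$ and reading off $\der(x,y)\leq d(a,b)$ from that; both work, and yours avoids the external citation.
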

\begin{proof}
  If $X$ has a single point, then the theory of a one-point structure suffices, so assume that $X$ has more than one point. By \cref{lem:fin-diam}, $X$ has finite diameter, so we can form the theory $T(X)$. There is clearly a continuous $1$-Lipschitz map $f$ from $S_1(T)$ to $X$. We have by \cref{cor:singleton-type-characterization} that $f$ is a bijection, so it is a topological isomorphism. For any $a,b \in X$, there are, by construction, $K$ and $L$ in $\Ff(X)$ with $\tp_X(K) = a$ and $\tp_X(L)=b$ such that $d_{\diam X}(K,L) = d(K,L) = \der (a,b)$. Therefore $f$ is an isometry as well. Finally, by \cref{prop:theory-is-stable}, $T(X)$ is stable.
\end{proof}

It is natural to wonder if our main theorem can be improved by constructing a superstable theory $T$ with $S_1(T)$ isomorphic to a given $X$. In other words, are there any non-trivial restrictions on the topometry type of $S_1(T)$ for $T$ superstable? Clearly if $X$ is not CB-analyzable,\footnote{This is the topometric generalization of scatteredness. See \cite{BenYaacov2008}.} then any such $T$ cannot be $\omega$-stable or totally transcendental, but it is also possible that this is the only obstruction. 

\begin{quest}
  If $(X,\tau,\der)$ is a compact topometric space with an open metric, is there a superstable theory $T$ such that $S_1(T)$ is isomorphic to $X$?

  If $(X,\tau, \der)$ is CB-analyzable, is there a totally transcendental theory $T$ such that $S_1(T)$ is isomorphic to $X$?
\end{quest}

For comparison, note that every totally disconnected compact Hausdorff space is $S_1(T)$ for a superstable theory $T$, and every scattered compact Hausdorff space is $S_1(T)$ for a totally transcendental theory $T$.

There is also the task of characterizing higher type spaces. Even for $2$-types, there are new restrictions on what topometry types are possible. If $T$ has models with more than one element, then $S_2(T)$ has a non-trivial definable set, namely, $d(x,y) = 0$.


\bibliographystyle{plain}
\bibliography{../ref}

\end{document}